\renewcommand{\email}[2][]{%
  \ifx\emails\@empty\relax\else{\g@addto@macro\emails{,\space}}\fi%
  \@ifnotempty{#1}{\g@addto@macro\emails{\textrm{(#1)}\space}}%
  \g@addto@macro\emails{#2}%
}
\theoremstyle{plain}
\newtheorem{theorem}{Theorem}[section]
\newtheorem{lemma}[theorem]{Lemma}
\newtheorem{proposition}[theorem]{Proposition}
\theoremstyle{definition}
\newtheorem{assumption}[theorem]{Assumption}
\theoremstyle{remark}
\newtheorem{remark}[theorem]{Remark}
\newcommand{\eps}{\varepsilon}
\newcommand{\la}{\left\langle}
\newcommand{\ra}{\right\rangle}
\newcommand{\BR}{\mathbb{R}}
\newcommand{\BP}{\mathbb{P}}
\newcommand{\BE}{\mathbb{E}}
\newcommand{\BZ}{\mathbb{Z}}
\newcommand{\BN}{\mathbb{N}}
\newcommand{\filt}{\mathscr{F}}
\newcommand{\normaltail}{\mathscr{T}}
\newcommand{\sfmin}{\mathsf{min}}
\newcommand{\sfX}{\mathsf{X}}
\newcommand{\sfx}{\mathsf{x}}
\newcommand{\aon}{\textsf{on}}
\newcommand{\aoff}{\textsf{off}}
\newcommand{\refer}{\textsf{ref}}
\newcommand{\border}{\textsf{border}}
\newcommand{\Tfin}{\mathsf{T}}
\newcommand{\sfz}{\mathsf{z}}
\begin{document}
\title{Randomly perturbed switching dynamics of a dc/dc converter}
\author{Chetan D. Pahlajani}
\address{Discipline of Mathematics\\ Indian Institute of Technology Gandhinagar\\ Palaj, Gandhinagar 382355\\ India}
\email{cdpahlajani@iitgn.ac.in} 
\date{\today.}


\begin{abstract}
In this paper, we study the effect of small Brownian noise on a switching dynamical system which models a first-order {\sc dc}/{\sc dc} buck converter. The state vector of this system comprises a continuous component whose dynamics switch, based on the {\sc on}/{\sc off} configuration of the circuit, between two ordinary differential equations ({\sc ode}), and a discrete component which keeps track of the {\sc on}/{\sc off} configurations. Assuming that the parameters and initial conditions of the unperturbed system have been tuned to yield a stable periodic orbit, we study the stochastic dynamics of this system when the forcing input in the {\sc on} state is subject to small white noise fluctuations of size $\eps$, $0<\eps \ll 1$. For the ensuing stochastic system whose dynamics switch at random times between a small noise stochastic differential equation ({\sc sde}) and an {\sc ode}, we prove a functional law of large numbers which states that in the limit of vanishing noise, the stochastic system converges to the underlying deterministic one on time horizons of order $\mathscr{O}(1/\eps^\nu)$, $0 \le \nu < 2/3$.
\end{abstract}


\maketitle

\color{black}
\section{Introduction}\label{S:Intro}
Ordinary differential equations ({\sc ode}) and dynamical systems play a fundamental role in modelling and analysis of various phenomena arising in science and engineering. In many applications, however, the smooth evolution of the {\sc ode} dynamics is punctuated by discrete instantaneous events which give rise to switching or non-smooth behaviour. Examples include instantaneous switching between different governing {\sc ode} in a power electronic circuit \cite{BKYY,BV_PowerElectronics,dBGGV}, discontinuous change in velocity for an oscillator impacting a boundary \cite{SH83,Nor91}, etc. In such instances, the dynamical system involves functions which are not smooth, but only piecewise-smooth in their arguments. Such piecewise-smooth dynamical systems \cite{dBBCK} display a wealth of phenomena not seen in their smooth counterparts, and have hence been the subject of much current research. 

Dynamical systems arising in practice are almost always subject to random disturbances, owing perhaps to fluctuating external forces, or uncertainties in the system, or unmodelled dynamics, etc. A more accurate picture can therefore be obtained by modelling such systems (at least in the continuous-time case) using {\it stochastic differential equations} ({\sc sde}); intuitively, this corresponds to adding a ``noise" term to the {\sc ode}. For cases where the perturbing noise is small, it is natural to ask whether the stochastic (perturbed) system converges to the deterministic (unperturbed) one in the limit of vanishing noise, and if yes, how the asymptotic behaviour of the fluctuations may be quantified. Such questions have played a significant role in the development of limit theorems for stochastic processes; see, for instance \cite{DZ98,EK86,FW_RPDS,PS_Multiscale}.



Although smooth dynamical systems perturbed by noise have been analysed in great depth over the past few decades, the effect of random noise on non-smooth or switching dynamical systems remains, with some exceptions (see, for instance, \cite{CL_TAC_2007,CL_SICON,HBB1,HBB2,SK14,SK_SD,SK_JNS}), relatively unexplored. One of the challenges in such an undertaking is that even in the absence of noise, the dynamics of switching systems can prove rather difficult to analyse. Part of the reason is the frequently encountered intractability of such systems to analytic computation \cite{BC_Boost, dBGGV}, even in cases when the component subsystems are linear. 

Our primary interest is the study of stochastic processes which arise due to small random perturbations of (non-smooth) switching dynamical systems. These problems are of immediate relevance in the analysis of {\sc dc/dc} converters in power electronics---naturally susceptible to noise---in which time-varying circuit topology leads to mathematical models characterised by switching between different governing {\sc ode}. In the purely deterministic setting, the dynamics of these systems have been extensively studied, with much of the work focussing on {\it buck converters} \cite{BKYY,dBGGV,HDJ92,FO96}; these are circuits used to transform an input {\sc dc} voltage to a lower output {\sc dc} voltage. Perhaps the simplest of these is the first-order buck converter; this is a system which switches between two linear first-order {\sc ode}. While this circuit is pleasantly amenable to some explicit computation, it nevertheless displays rich dynamics in certain parameter regimes. Periodic orbits, bifurcations and chaos for this converter have been studied in \cite{BKYY,HDJ92}. 

In the present paper, we study small random perturbations of a switching dynamical system which models a first-order buck converter. The state vector of this system comprises a continuous component (the inductor current) governed by one of two different {\sc ode}, and a discrete component which takes values $1$ or $0$ depending on whether the circuit is in the {\sc on} versus {\sc off} configurations, respectively. Assuming that the parameters and initial conditions of the unperturbed system have been tuned to yield a stable periodic orbit, we study the stochastic dynamics of this system when the forcing (input {\sc dc} voltage) is subject to small white noise fluctuations of size $\eps$, with $0< \eps \ll 1$. Our main result is a {\it functional law of large numbers} ({\sc flln}) which states that, as $\eps \searrow 0$, the solution of the stochastically perturbed system converges to that of the underlying deterministic system, over time horizons $\Tfin_\eps$ of order $\mathscr{O}(1/\eps^\nu)$ for any $0 \le \nu < 2/3$.

Part of the novelty of this work, in the context of the literature on switching diffusions (see, e.g., \cite{BBG_JMAA_1999,LuoMao,YZ10,YZ_book}), is that the switching in our problem is {\it not} driven by a discrete-state stochastic process (whose transitions may occur at a rate depending on the continuous component of the state); rather, the switching occurs whenever the continuous component of the state hits a threshold ({\sc on} $\to$ {\sc off}), or upon the arrival of a time-periodic signal ({\sc off} $\to$ {\sc on}). Our switching is thus {\it entirely} determined by the continuous component, together with a periodic clock signal. We also note that since the input {\sc dc} voltage in the buck converter influences the inductor current only in the {\sc on} state \cite{BKYY}, the perturbed system has alternating stochastic and deterministic evolutions: the dynamics switch {\it at random times} between an {\sc sde} driven by a small Brownian motion of size $\eps$ in the {\sc on} state, and an {\sc ode} in the {\sc off} state. The import of our results is that even in the presence of small stochastic perturbations, one may expect the buck converter to function close to its desired operation for ``reasonably long" times. 

The rest of the paper is organised as follows. In Section \ref{S:ProblemStatement}, we describe the switching systems (deterministic and stochastic) in some detail, and we pose our problem of interest. Next, in Section \ref{S:MainResult}, we state our main result (Theorem \ref{T:FLLN}) and outline the steps to the proof through a sequence of auxiliary lemmas and propositions. A few of these auxiliary results are proved in Section \ref{S:MainResult}, with the remainder (the slightly lengthier ones) being deferred to Section \ref{S:Proofs}.

\section{Problem Description}\label{S:ProblemStatement}
In this section, we formulate our problem of interest. We start with a description, including the governing {\sc ode}'s and the switching mechanism,  of a dynamical system modelling a first-order buck converter in Section \ref{SS:Det_sw_sys}. Random perturbations of this system, which lead to a switching {\sc sde}/{\sc ode} model, are discussed in Section \ref{SS:Stoch_sw_sys}. In Section \ref{SS:explicit_formulas}, we obtain explicit formulas for solutions to both the {\sc sde} and the {\sc ode}'s {\it between} switching times, and we piece these together {\it at} switching times to obtain expressions describing the overall evolution of both the perturbed (stochastic) and unperturbed (deterministic) switching systems. Finally, after showing in Section \ref{SS:stable_periodic_orbit} how problem parameters can be tuned and initial conditions chosen to ensure that the unperturbed system has a stable periodic orbit, we pose our questions of interest.

Before proceeding further, we note that we have a {\it hybrid} system. Indeed, the full state of the system is specified by a vector $z=(x,y)$ taking values in $\mathscr{Z} \triangleq \BR \times \{0,1\}$; here, $x \in \BR$ is the continuous component of the state---corresponding to the inductor current in the buck converter---while the discrete component $y$ takes values $1$ or $0$ depending on whether the switch is {\sc on} or {\sc off}. 

\subsection{Deterministic switching system}\label{SS:Det_sw_sys}
As noted above, the state of our system at time $t \in [0,\infty)$ will be specified by a vector $z(t) \triangleq (x(t),y(t))$ taking values in $\mathscr{Z} \triangleq \BR \times \{0,1\}$. We will assume that the dynamics of $x(t)$ when $y(t)=1$ ({\sc on} configuration) are governed by the {\sc ode}
\begin{equation}\label{E:ode-on}
\frac{dx}{dt}= - \alpha_\aon \thinspace x + \beta,
\end{equation}
while the dynamics of $x(t)$ when $y(t)=0$ ({\sc off} configuration) are described by
\begin{equation}\label{E:ode-off}
\frac{dx}{dt}= - \alpha_\aoff  \thinspace x. 
\end{equation}
Here, $\beta$, $\alpha_\aon$, $\alpha_\aoff$ are fixed positive parameters with $\beta$ representing the (rescaled) input voltage of an external power source, while $\alpha_\aon$ and $\alpha_\aoff$ denote the (rescaled) resistances in the {\sc on} and {\sc off} configurations, respectively.\footnote{More precisely, $\beta = V_{\mathsf{in}}/L$, $\alpha_\aon=R/L$ and $\alpha_\aoff=(R+r_d)/L$, where $V_{\mathsf{in}}$ is the input voltage, $R$ denotes the load resistance, and $r_d$ is the diode resistance \cite{BKYY}.}

 The switching between the {\sc on} and {\sc off} configurations is effected as follows. A reference level $x_\refer  \in \left(0,\beta/\alpha_\aon \right)$ is fixed. Suppose the system starts in the {\sc on} configuration, i.e., $y(0)=1$, with $x(0) \in (0,x_\refer)$. The current $x(t)$ increases according to \eqref{E:ode-on}, with $y(t)$ staying at 1, until $x(t)$ hits the level $x_{\sf ref}$. At this point, an {\sc on} $\to$ {\sc off} transition occurs: $y(t)$ jumps to 0 and $x(t)$ now evolves according to \eqref{E:ode-off}. This continues until the next arrival of a periodic clock signal with period 1 (which arrives at times $n \in \BN$) triggers an {\sc off} $\to$ {\sc on} transition: $y(t)$ jumps back to 1, $x(t)$ again evolves according to \eqref{E:ode-on}, and the cycle continues. Note that if a clock pulse arrives in the {\sc on} configuration, it is ignored. Of course, if one starts in the {\sc off} configuration, $x(t)$ evolves according to \eqref{E:ode-off} until the next clock pulse, at which point the system goes {\sc on}, and the subsequent dynamics are as described above. An important assumption in our analysis is that $x(t)$ is continuous across switching times.

\subsection{Random perturbations}\label{SS:Stoch_sw_sys}

We now suppose that the forcing term $\beta$ in \eqref{E:ode-on} is subjected to small white noise perturbations of size $\eps$, $0 < \eps \ll 1$; for the buck converter, this corresponds to small random fluctuations in the input voltage. In this setting, the state of the system at time $t \in [0,\infty)$ is given by a stochastic process $Z^\eps_t \triangleq (X^\eps_t,Y^\eps_t)$ taking values in $\mathscr{Z}$. The dynamics of $X^\eps_t$ in the {\sc on} configuration ($Y^\eps_t=1$) are now governed by the {\sc sde}
\begin{equation}\label{E:sde-on}
dX^\eps_t = (-\alpha_\aon X^\eps_t + \beta)dt + \eps dW_t,
\end{equation}
where $W_t$ is a standard one-dimensional Brownian motion, while evolution of $X^\eps_t$ in the {\sc off} state ($Y^\eps_t=0$) is governed by the {\sc ode} \eqref{E:ode-off}, as before. The switching mechanism is similar to that in the unperturbed case, but with the stochastic processes $X^\eps_t$, $Y^\eps_t$ playing the roles of $x(t)$, $y(t)$. Note, in particular, that the times for {\sc on} $\to$ {\sc off} transitions are given by {\it passage times} of $X^\eps_t$ (governed by \eqref{E:sde-on}) to the level $x_\refer$. As before, $X^\eps_t$ is assumed to be continuous across switching times.

\subsection{Explicit formulas}\label{SS:explicit_formulas}

The foregoing discussion makes clear how $z(t)=(x(t),y(t))$ and $Z^\eps_t = (X^\eps_t,Y^\eps_t)$ are to be obtained, once an initial condition $z_0=(x_0,y_0) \in \mathscr{Z}$ has been specified: the evolutions of $x(t)$ and $X^\eps_t$ are given, respectively, by {\it concatenating} solutions to \eqref{E:ode-on} and \eqref{E:ode-off}, and solutions to \eqref{E:sde-on} and \eqref{E:ode-off}, at the respective switching times, maintaining continuity. The function $y(t)$ and the sample paths of $Y^\eps_t$---which are piecewise constant and take values in $\{0,1\}$---will be assumed to be right-continuous. Below, we obtain expressions for $z(t)$ and $Z^\eps_t$ starting from initial condition $z_0=(x_0,1)$ where $x_0 \in (0,x_\refer)$. We note that starting with $y_0=1$ entails no real loss of generality; indeed, as will become apparent, the expressions below can be easily modified to accommodate the case when $y_0=0$. 

In the sequel, we will use $1_A$ to denote the indicator function of the set $A$, and for real numbers $a,b$, we let $a \wedge b$ and $a \vee b$ denote the minimum and maximum of $a$ and $b$, respectively.


\subsubsection{Solution of deterministic switching system}
As indicated above, we fix an initial condition $z_0=(x_0,1)$ with $x_0 \in (0,x_\refer)$. Let $s_0 \triangleq 0$, and set $\sfx^{0,\aoff}(t) \equiv x_0$. Next, define
$\sfx^{1,\aon}(t) \triangleq 1_{[s_0,\infty)}(t) \cdot \left\{\beta/\alpha_\aon + \left(x_0 - \beta/\alpha_\aon \right) e^{-\alpha_\aon t}\right\}$ for $t \ge 0$.
Let
$t_1 \triangleq \inf\{t > 0:\sfx^{1,\aon}(t)=x_\refer\}$
be the first time that $\sfx^{1,\aon}(t)$ reaches level $x_\refer$ and define
$\sfx^{1,\aoff}(t) \triangleq 1_{[t_1,\infty)}(t) \cdot x_\refer \medspace e^{-\alpha_\aoff(t-t_1)}.$
Let $s_1 \triangleq \inf \{t>t_1:t \in \BZ\}$ be the time of arrival of the next clock pulse. The solution of the deterministic switching system on the interval $[s_0,s_1)$ is now given by
$\sfx^1(t) \triangleq  \sfx^{1,\aon}(t) \cdot 1_{[s_0,t_1)}(t)+ \sfx^{1,\aoff}(t) \cdot 1_{[t_1,s_1)}(t)$.

In general, given the solution over $[s_0,s_{n-1})$, the solution $\sfx^{n}(t)$ over $[s_{n-1},s_n)$ is obtained as follows. We let
\begin{equation}\label{E:det_state_pieces}
\begin{aligned}
\sfx^{n,\aon}(t) &\triangleq 1_{[s_{n-1},\infty)}(t) \cdot \left\{\frac{\beta}{\alpha_\aon} + \left(\sfx^{n-1,\aoff}(s_{n-1}) - \frac{\beta}{\alpha_\aon}\right) e^{-\alpha_\aon (t-s_{n-1})}\right\} \qquad \text{for $t \ge 0$,}\\
t_{n} &\triangleq \inf\{t>s_{n-1}:\sfx^{n,\aon}(t)=x_\refer\},\\
\sfx^{n,\aoff}(t) &\triangleq 1_{[t_{n},\infty)}(t) \cdot x_\refer \medspace e^{-\alpha_\aoff(t-t_{n})} \qquad \text{for $t \ge 0$,}\\
s_{n} &\triangleq \inf\{t>t_{n}: t \in \BZ\},\\
\sfx^{n}(t) &\triangleq \sfx^{n,\aon}(t) \cdot 1_{[s_{n-1},t_n)}(t) + \sfx^{n,\aoff}(t) \cdot 1_{[t_{n},s_{n})}(t).
\end{aligned}
\end{equation}
The evolution of the deterministic switching system over $[0,\infty)$ is now given by 
\begin{equation}\label{E:det_state_full}
z(t) =(x(t),y(t)) \qquad \text{where} \quad x(t) \triangleq  \sum_{n \ge 1} \sfx^{n}(t), \quad y(t) \triangleq \sum_{n \ge 1} 1_{[s_{n-1},t_n)}(t). 
\end{equation}

We have thus decomposed the evolution into a sequence of {\sc on}/{\sc off} cycles with the switching times $t_n$ and $s_n$ corresponding to the $n$-th {\sc on} $\to$ {\sc off} and {\sc off} $\to$ {\sc on} transitions, respectively; next, we have solved the {\sc ode} \eqref{E:ode-on} and \eqref{E:ode-off} between switching times,  and then linked the pieces together while maintaining continuity of $x(t)$ at switching times. 

\subsubsection{Solution of stochastic switching system}
We now provide a similar detailed construction of the stochastic process $Z^\eps_t=(X^\eps_t,Y^\eps_t)$ starting from the same initial condition $z_0=(x_0,1)$. Let $W=\{W_t, \filt_t:0 \le t < \infty\}$ be a standard one-dimensional Brownian motion on the probability space $(\Omega,\filt,\BP)$. We introduce, for each $n \in \BN$, the processes $\sfX^{n,\eps,\aon}_t$, $\sfX^{n,\eps,\aoff}_t$, $\sfX^{n,\eps}_t$ and random switching times $\tau_n^\eps$, $\sigma_n^\eps$, which are defined recursively as follows. Set $\sigma_0^\eps \triangleq 0$ and define $\sfX^{0,\eps,\aoff}_t \equiv x_0$. Now, let 
$\sfX^{1,\eps,\aon}_t \triangleq 1_{[\sigma_0^\eps,\infty)}(t) \cdot \left\{\beta/\alpha_\aon + \left(x_0- \beta/\alpha_\aon \right) e^{-\alpha_\aon t} + \eps \int_{0}^t e^{-\alpha_\aon(t-u)} dW_u\right\}$ for $t \ge 0$,
and let $\tau_1^\eps \triangleq \inf\{t >0:\sfX^{1,\eps,\aon}_t=x_\refer  \}$ be the first passage time of $\sfX^{1,\eps,\aon}_t$ to level $x_\refer$. We next define
$\sfX^{1,\eps,\aoff}_t \triangleq 1_{[\tau_1^\eps,\infty)}(t) \cdot x_\refer \medspace e^{-\alpha_\aoff(t-\tau_1^\eps)}$ 
and let $\sigma_1^\eps \triangleq \inf\{t> \tau_1^\eps:t \in \BZ\}$ be the time of arrival of the next clock pulse. We now set
$\sfX^{1,\eps}_t \triangleq \sfX^{1,\eps,\aon}_t \cdot 1_{[\sigma_0^\eps,\tau_1^\eps)}(t) + \sfX^{1,\eps,\aoff}_t \cdot 1_{[\tau_1^\eps,\sigma_1^\eps)}(t)$.

To compactly express $\sfX^{n,\eps,\aon}_t$ for general $n$, let $I=\{I_t:0 \le t < \infty\}$ be the process defined by
$I_t \triangleq \int_0^t e^{\alpha_\aon u} dW_u$ for $t \ge 0$.
Note that $I$ is a continuous, square-integrable Gaussian martingale. We now define, for each $n \in \BN$, 
\begin{equation}\label{E:stoch_state_pieces}
\begin{aligned}
\sfX^{n,\eps,\aon}_t &\triangleq 1_{[\sigma_{n-1}^\eps,\infty)}(t) \cdot \left\{  \frac{\beta}{\alpha_\aon} + \left(\sfX^{n-1,\eps,\aoff}_{\sigma_{n-1}^\eps}- \frac{\beta}{\alpha_\aon}\right) e^{-\alpha_\aon (t-\sigma_{n-1}^\eps)} + \eps \thinspace e^{-\alpha_\aon t} \left(I_t - I_{t\wedge \sigma_{n-1}^\eps}  \right) \right\},\\
\tau_{n}^\eps &\triangleq \inf\{t>\sigma_{n-1}^\eps: \sfX^{n,\eps,\aon}_t = x_\refer\},\\
\sfX^{n,\eps,\aoff}_t &\triangleq 1_{[\tau_{n}^\eps,\infty)}(t) \cdot x_\refer \medspace e^{-\alpha_\aoff(t-\tau_{n}^\eps)},\\
\sigma_{n}^\eps &\triangleq \inf\{t > \tau_{n}^\eps: t \in \BZ\},\\
\sfX^{n,\eps}_t &\triangleq  \sfX^{n,\eps,\aon}_t \cdot 1_{[\sigma_{n-1}^\eps,\tau_{n}^\eps)}(t) + \sfX^{n,\eps,\aoff}_t \cdot 1_{[\tau_{n}^\eps,\sigma_{n}^\eps)}(t).
\end{aligned}
\end{equation}
Our stochastic process of interest is now given by
\begin{equation}\label{E:stoch_state_full}
Z^\eps_t \triangleq (X^\eps_t,Y^\eps_t), \qquad \text{where} \qquad X^\eps_t \triangleq \sum_{n \ge 1} \sfX^{n,\eps}_t, \quad Y^\eps_t \triangleq  \sum_{n \ge 1} 1_{[\sigma_{n-1}^\eps,\tau_n^\eps)}(t). 
\end{equation}

Once again, the evolution comprises a sequence of {\sc on}/{\sc off} cycles, with the quantities above admitting a natural interpretation which parallels the unperturbed (deterministic) case.


\subsection{Stable periodic orbit}\label{SS:stable_periodic_orbit}
We now describe the assumptions on the problem parameters that ensure the existence of a stable periodic solution to \eqref{E:det_state_pieces}, \eqref{E:det_state_full}. The argument proceeds by analysing the {\it stroboscopic} map \cite{BKYY} which takes the system state at one clock instant to the state at the next. Map-based techniques are used extensively in analysing the switching dynamics of power electronic circuits; see also \cite{dBGGV,HDJ92}. 


\begin{assumption}\label{A:assumptions_real}
Fix $x_\refer>0$, $0< \alpha_\aon < \log 2$. Select $\beta>0$ such that
\begin{equation}\label{E:beta_assumption_real}
2 x_\refer \thinspace \alpha_{\aon} < \beta < \left( \frac{e^{\alpha_\aon}}{e^{\alpha_\aon}-1}\right) x_\refer \thinspace \alpha_\aon.
\end{equation}
Let $\alpha_\aoff>0$ such that
\begin{equation}\label{E:alpha_off_assumption_real}
\alpha_\aon < \alpha_\aoff < \left(\frac{\beta/\alpha_\aon - x_\refer}{x_\refer}\right) \alpha_\aon.
\end{equation} 
\end{assumption}


We now define a map $f:[0,x_\refer] \to [0,x_\refer]$ which maps $x_0 \in [0,x_\refer]$ to the solution $x(t)$ at time 1, subject to the initial condition being $(x_0,1)$, i.e., $f:x_0 \mapsto x(1;x_0,1)$. We are interested in the case when $f$ is only {\it piecewise smooth}. Put another way, if we let 
$x_\border \triangleq \beta/\alpha_\aon + \left( x_\refer - \beta/\alpha_\aon   \right) e^{\alpha_\aon}$
be the particular value of $x_0$ for which the corresponding $\sfx^{1,\aon}(t)$ satisfies $\sfx^{1,\aon}(1)=x_\refer$, we would like $x_\border \in (0,x_\refer)$. It is easily seen that the upper bound on $\beta$ in \eqref{E:beta_assumption_real} ensures that such is indeed the case. The map $f:x_0 \mapsto x(1;x_0,1)$ is seen to be given by 
\begin{equation}\label{E:psmap} 
f(x) \triangleq \begin{cases} \frac{\beta}{\alpha_\aon} + \left( x-\frac{\beta}{\alpha_\aon} \right) e^{-\alpha_\aon} \qquad & \text{if $0 \le x \le x_\border$,}\\ x_\refer \thinspace e^{-\alpha_\aoff} \left( \frac{\beta/\alpha_\aon - x}{\beta/\alpha_\aon - x_\refer}  \right)^{\alpha_\aoff/\alpha_\aon} \qquad & \text{if $x_\border<x\le x_\refer$.} \end{cases}
\end{equation}

\begin{proposition}\label{P:fixed_point}
Suppose Assumption \ref{A:assumptions_real} holds. Then, the mapping $f:[0,x_\refer] \to [0,x_\refer]$ has a unique fixed point $x^*$ which lies in the interval $(x_\border,x_\refer)$. Further, $|f^\prime(x^*)|<1$, implying that $x^*$ is a stable fixed point of the discrete-time dynamical system $x_n \mapsto f(x_n)$.
\end{proposition}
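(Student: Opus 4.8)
The plan is to exploit the piecewise structure of $f$ in \eqref{E:psmap}: on the first branch $[0,x_\border]$ the map is affine with slope $e^{-\alpha_\aon}\in(0,1)$, on the second branch $(x_\border,x_\refer]$ it is a strictly decreasing power function, and the two branches meet at $x_\border$. I would first show that no fixed point can sit on the first branch, then use monotonicity on the second branch to get existence and uniqueness via the intermediate value theorem, and finally reduce the derivative bound at the fixed point --- via a logarithmic-derivative identity --- to the standing hypothesis \eqref{E:alpha_off_assumption_real} on $\alpha_\aoff$.

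Write $c\triangleq\beta/\alpha_\aon$ for brevity. I would begin by unpacking Assumption \ref{A:assumptions_real}: the lower bound in \eqref{E:beta_assumption_real} gives $c>2x_\refer$ (hence $c>x_\refer>x_\border$), while the upper bound there is exactly the condition $x_\border>0$; combined with $e^{\alpha_\aon}>1$ this yields $x_\border\in(0,x_\refer)$. Using the identity $x_\border-c=(x_\refer-c)e^{\alpha_\aon}$ one checks that the first branch of $f$ takes the value $x_\refer$ at $x_\border$ and that the second-branch formula also equals $x_\refer$ there, so $f$ is continuous on $[0,x_\refer]$ and maps it into $(0,x_\refer]$. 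On $[0,x_\border]$ we have $f(x)-x=(c-x)(1-e^{-\alpha_\aon})$, which is affine with negative slope and equals $x_\refer-x_\border>0$ at $x=x_\border$; hence $f(x)>x$ throughout $[0,x_\border]$ and there is no fixed point there (equivalently, $f(x)=x$ on this branch would force $x=c>x_\border$). The remaining hypotheses ($0<\alpha_\aon<\log 2$ and $\alpha_\aon<\alpha_\aoff$) are nonemptiness conditions for the parameter windows and are not otherwise invoked.

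Consequently every fixed point of $f$ lies in $(x_\border,x_\refer)$, where $f$ coincides with $g(x)\triangleq x_\refer e^{-\alpha_\aoff}\bigl((c-x)/(c-x_\refer)\bigr)^{\alpha_\aoff/\alpha_\aon}$. Since $x\mapsto(c-x)/(c-x_\refer)$ is positive and strictly decreasing on this interval and the exponent $\alpha_\aoff/\alpha_\aon$ is positive, $g$ is strictly decreasing, with $g(x_\border)=x_\refer$ and $g(x_\refer)=x_\refer e^{-\alpha_\aoff}<x_\refer$. Thus $h(x)\triangleq g(x)-x$ is strictly decreasing with $h(x_\border)=x_\refer-x_\border>0$ and $h(x_\refer)=x_\refer(e^{-\alpha_\aoff}-1)<0$, so it has exactly one zero $x^*$ in $(x_\border,x_\refer)$; this is the unique fixed point of $f$ on $[0,x_\refer]$, and it lies in $(x_\border,x_\refer)$ as claimed.

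For stability I would use the logarithmic derivative $g'(x)/g(x)=-\alpha_\aoff/\bigl(\alpha_\aon(c-x)\bigr)$; evaluating at $x^*$ and using $g(x^*)=x^*$ gives $f'(x^*)=g'(x^*)=-\alpha_\aoff x^*/\bigl(\alpha_\aon(c-x^*)\bigr)$, which is negative since $c>x^*$. Hence $|f'(x^*)|<1$ is equivalent to $\alpha_\aoff/\alpha_\aon<(c-x^*)/x^*$. Because $t\mapsto(c-t)/t$ is strictly decreasing and $x^*<x_\refer$, we have $(c-x^*)/x^*>(c-x_\refer)/x_\refer$, and the upper bound in \eqref{E:alpha_off_assumption_real} says precisely $\alpha_\aoff/\alpha_\aon<(c-x_\refer)/x_\refer$; chaining these gives $|f'(x^*)|<1$. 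All the computations are elementary; the only part requiring care --- and the closest this argument has to an obstacle --- is the bookkeeping that ties each algebraic inequality in Assumption \ref{A:assumptions_real} to its geometric role ($x_\border$ strictly between $0$ and $x_\refer$, the matching of the two branches at $x_\border$, the endpoint signs of $h$, and the reduction of the derivative estimate at $x^*$ to the constraint on $\alpha_\aoff$), together with being careful not to conflate the two branches at $x_\border$.
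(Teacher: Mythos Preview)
Your proof is correct and follows essentially the same approach as the paper's: ruling out fixed points on the affine branch via $f(x)>x$ there, applying the intermediate value theorem together with strict monotonicity on the second branch to get existence and uniqueness in $(x_\border,x_\refer)$, and then reducing $|f'(x^*)|<1$ to the upper bound on $\alpha_\aoff$ in \eqref{E:alpha_off_assumption_real} via the identity $|f'(x^*)|=\alpha_\aoff x^*/(\beta-\alpha_\aon x^*)$. The only cosmetic differences are that the paper runs the intermediate value theorem on all of $[0,x_\refer]$ (using $h(0)>0$) and bounds $|f'(x)|$ uniformly on the second branch via $f(x)\le x_\refer$, whereas you work directly on $(x_\border,x_\refer]$ and invoke $f(x^*)=x^*$.
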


\begin{proof}
Let $h(x) \triangleq f(x)-x$. Note that $h(0)=f(0)>0$, $h(x_\refer)=f(x_\refer)-x_\refer=x_\refer(e^{-\alpha_\aoff}-1)<0$. Since $h$ is continuous, there exists $x^* \in (0,x_\refer)$ such that $h(x^*)=0$, i.e., $f(x^*)=x^*$. Since $f(x)>x$ for all $x \in [0,x_\border]$, we must have $x^* \in (x_\border,x_\refer)$. Further, since $f(x)$ decreases on $(x_\border,x_\refer)$ as $x$ increases over this same interval, $f$ can have at most one fixed point. It is easily checked that for $x \in (x_\border,x_\refer)$, we have
\begin{equation*}
|f^\prime(x)| =  \frac{\alpha_\aoff \thinspace f(x)}{\beta - \alpha_\aon \thinspace x} \le \frac{\alpha_\aoff \thinspace x_\refer}{\beta - \alpha_\aon \thinspace x_\refer} < 1,
\end{equation*}
where the last inequality follows from the upper bound on $\alpha_\aoff$ in \eqref{E:alpha_off_assumption_real}. This proves stability of $x^*$.
\end{proof}

We can now pose our principal questions of interest. Suppose  $z(\cdot)$, $Z^\eps_\cdot$ are obtained from  \eqref{E:det_state_full}, \eqref{E:stoch_state_full}, respectively, with initial conditions $z(0)=Z^\eps_0=(x^*,1)$, where $x^*$ is as in Proposition \ref{P:fixed_point}. 
\begin{itemize}
\item For any fixed $\Tfin \in \BN$, do the dynamics of $Z^\eps_\cdot$ converge to those of $z(\cdot)$ in a suitable sense as $\eps \searrow 0$?
\item If yes, can the results be strengthened to the case when $\Tfin=\Tfin_\eps \in \BN$ grows to infinity, but ``not too fast", as $\eps \searrow 0$?
\end{itemize} 
In the next section, we will show that both these questions can be answered in the affirmative, provided $\Tfin_\eps = \mathscr{O}(1/\eps^\nu)$ with $0 \le \nu < 2/3$.

\section{Main Result}\label{S:MainResult}
Recall that the state space for the evolution of $z(t)$ and $Z^\eps_t$ is $\mathscr{Z}=\BR \times \{0,1\}$, which inherits the metric 
$r(z_1,z_2) \triangleq \left\{|x_1-x_2|^2 + |y_1-y_2|^2 \right\}^{1/2}$ for $z_1=(x_1,y_1)$, $z_2=(x_2,y_2) \in \mathscr{Z}$,
from $\BR^2$. If $I$ is a closed subinterval of $[0,\infty)$, we let $D(I;\mathscr{Z})$ be the space of functions $\sfz:I \to \mathscr{Z}$ which are right-continuous with left limits. This space can be equipped with the Skorokhod metric $d_I$ \cite{ConvProbMeas,EK86}, which renders it complete and separable. If $\sfz \in D(I;\mathscr{Z})$ and $J$ is a closed subinterval of $I$, then the restriction of $\sfz$ to $J$ is an element of $D(J;\mathscr{Z})$ which, for simplicity of notation, will also be denoted by $\sfz$. For our switching systems of interest, we note that the function $z(t)$ in \eqref{E:det_state_full}, and the sample paths of the process $Z^\eps_t$ in \eqref{E:stoch_state_full}, belong to $D([0,\infty);\mathscr{Z})$. Our goal here is to study the convergence, as $\eps \searrow 0$, of $Z^\eps_\cdot$ to $z(\cdot)$ in the space $D([0,\Tfin_\eps];\mathscr{Z})$ for time horizons $\Tfin_\eps = \mathscr{O}(1/\eps^\nu)$ where $0 \le \nu <2/3$.

We start by defining the Skorokhod metric $d_I$ on the space $D(I;\mathscr{Z})$, where $I=[0,\Tfin]$ for some $\Tfin >0$.\footnote{See \cite{ConvProbMeas,EK86} for the case $I=[0,\infty)$.} Let $\tilde\Lambda_\Tfin$ be the set of all strictly increasing continuous mappings from $[0,\Tfin]$ onto itself,\footnote{Thus, we have $\lambda(0)=0$ and $\lambda(\Tfin)=\Tfin$ for all $\lambda \in \tilde{\Lambda}_\Tfin$.} and let $\Lambda_\Tfin$ be the set of functions $\lambda \in \tilde{\Lambda}_\Tfin$ for which  
\begin{equation*}
\gamma_\Tfin(\lambda) \triangleq \sup_{0 \le s < t \le \Tfin} \left|  \log \frac{\lambda(t)-\lambda(s)}{t-s}   \right| < \infty.
\end{equation*} 
For $\sfz_1$, $\sfz_2 \in D([0,\Tfin];\mathscr{Z})$, we now define
\begin{equation}\label{E:Sk_metric}
d_{[0,\Tfin]}(\sfz_1,\sfz_2) \triangleq \inf_{\lambda \in \Lambda_\Tfin} \left\{ \gamma_\Tfin(\lambda) \vee \sup_{0 \le t \le \Tfin} r\left(\sfz_1(t),\sfz_2(\lambda(t))\right)    \right\}.
\end{equation} 
Note that if $d^u_{[0,\Tfin]}(\sfz_1,\sfz_2) \triangleq \sup_{0 \le t \le \Tfin} r(\sfz_1(t),\sfz_2(t))$ is the uniform metric on $D([0,\Tfin];\mathscr{Z})$, then $d_{[0,\Tfin]}(\sfz_1,\sfz_2) \le d^u_{[0,\Tfin]}(\sfz_1,\sfz_2)$. Indeed, the latter corresponds to the specific choice $\lambda(t) \equiv t$.

We now state our main result.

\begin{theorem}[Main Theorem]\label{T:FLLN}
Fix $\mathfrak{T} \in \BN$, $0 \le \nu<2/3$. For $\eps \in (0,1)$, let $\Tfin_\eps \in \BN$ such that $\Tfin_\eps \le \mathfrak{T}/\eps^\nu$. Suppose  $z(\cdot)$, $Z^\eps_\cdot$ are given by  \eqref{E:det_state_full}, \eqref{E:stoch_state_full}, respectively, with initial conditions $z(0)=Z^\eps_0=(x^*,1)$, where $x^*$ is as in Proposition \ref{P:fixed_point}. Then, for any $p \in [1,\infty)$, we have that $d_{[0,\Tfin_\eps]}(Z^\eps,z) \to 0$ in $L^p$, i.e.,
\begin{equation}\label{E:conv-in-Lp}
\lim_{\eps \searrow 0} \BE \left[\left(d_{[0,\Tfin_\eps]}(Z^\eps,z)\right)^p \right]=0.
\end{equation}
\end{theorem}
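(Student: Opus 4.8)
The plan is to reduce the theorem to a cycle-by-cycle (``stroboscopic'') analysis: quantify how the small noise perturbs a single {\sc on}/{\sc off} cycle, and then control how these per-cycle errors accumulate over the $\Tfin_\eps$ cycles in $[0,\Tfin_\eps]$. Since $z(0)=(x^*,1)$ with $x^*$ the stable fixed point of $f$, the deterministic solution is the period-$1$ orbit: $t_n=(n-1)+\mathsf{d}$ and $s_n=n$ for a fixed duty ratio $\mathsf{d}\in(0,1)$ (here $\mathsf{d}<1$ because $x^*>x_\border$, by Proposition \ref{P:fixed_point}). The goal is to show that, for $\eps$ small, $X^\eps$ stays $\mathscr O(\eps)$-close to this orbit and $\tau_n^\eps$ stays $\mathscr O(\eps)$-close to $t_n$, uniformly over $n\le\Tfin_\eps$, and then to convert pathwise closeness into the Skorokhod $L^p$ bound by exhibiting an explicit time change. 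I expect the argument to be organised into (i) a single-cycle estimate, (ii) an iteration/accumulation step, and (iii) a final assembly of the Skorokhod bound, with the rare events of ``large noise'' handled throughout.

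First I would prove a single-cycle lemma. Conditioning on the clock-time state $X^\eps_{n-1}=\xi$ with $\xi$ in a fixed small neighbourhood $U$ of $x^*$, on $[n-1,\sigma_n^\eps]$ one has $X^\eps_t=\phi_\xi(t)+\eps M_t$, where $\phi_\xi$ solves \eqref{E:ode-on} from $\xi$ and $M_t=e^{-\alpha_\aon t}(I_t-I_{t\wedge(n-1)})$ is, conditionally on $\filt_{n-1}$, a zero-mean Gaussian process whose variance is bounded uniformly in $n$. Because $\phi_\xi$ crosses $x_\refer$ transversally --- the crossing slope is $\beta-\alpha_\aon x_\refer>0$, uniformly for $\xi\in U$, since $x_\refer<\beta/\alpha_\aon$ --- the first-passage time to $x_\refer$ is a Lipschitz functional of the perturbed path, so $|\tau_n^\eps-((n-1)+t^*(\xi))|\le C\eps\sup_{[n-1,n]}|M_t|$ off an event $B_n$ on which the noise is large enough either to create a spurious early crossing or to prevent the crossing before the clock pulse at $n$; a Gaussian large-deviation estimate gives $\BP(B_n\mid\filt_{n-1})\le e^{-c/\eps^2}$. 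On $B_n^c$, since the {\sc off}-phase evolution \eqref{E:ode-off} is deterministic, this propagates to $|X^\eps_n-f(\xi)|\le C\eps\sup_{[n-1,n]}|M_t|$ and, applying the obvious piecewise-linear local time change fixing $n-1,n$ and sending $\tau_n^\eps\mapsto(n-1)+t^*(\xi)$, to a closeness of the whole cycle path to the deterministic cycle path from $\xi$ of the same order; all constants can be taken uniform in $n$ and in $\xi\in U$. Taking $L^p$ moments then gives per-cycle errors that are $\mathscr O(\eps)$ in $L^p$, uniformly.

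Next I would iterate. Set $E_n\Def X^\eps_n-x^*$, $\Xi_n\Def\sup_{[n-1,n]}|M_t|$, and let $G_N\Def\bigcap_{n\le N}(B_n^c\cap\{X^\eps_{n-1}\in U\})$. On $G_N$, Taylor expansion of $f$ at $x^*$ together with the single-cycle lemma yields $E_n=f'(x^*)E_{n-1}+R_n$ with $|R_n|\le C(E_{n-1}^2+\eps\Xi_n)$. Writing $\rho\Def|f'(x^*)|<1$ (Proposition \ref{P:fixed_point}) and unrolling, $\sup_{n\le N}|E_n|\le\frac{C}{1-\rho}\bigl(\eps\max_{n\le N}\Xi_n+(\sup_{n\le N}|E_n|)^2\bigr)$, and since the noise term is already $o(1)$ a bootstrap gives $\sup_{n\le N}|E_n|\le C'\eps\max_{n\le N}\Xi_n$ on $G_N$, hence also $\sup_{n\le N}|\tau_n^\eps-t_n|\le C'\eps\max_{n\le N}\Xi_n$. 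As the $\Xi_n$ have, conditionally on the past, Gaussian tails with uniformly bounded scale, $\BE\bigl[(\max_{n\le N}\Xi_n)^p\bigr]^{1/p}=\mathscr O(\sqrt{\log N})$, so these errors are $\mathscr O(\eps\sqrt{\log\Tfin_\eps})$ in $L^p$ on $G_N$; and once $\sup_n|E_n|$ is this small, remaining in $U$ is automatic, so $\BP(G_N^c)\le\Tfin_\eps\,e^{-c/\eps^2}$. This long-horizon bookkeeping --- balancing the per-cycle noise scale, the number $\Tfin_\eps$ of cycles, the constants entering the contraction/Taylor estimates (which in turn fix the admissible size of $U$ and hence the constant $c$ above), and the rare-event probabilities --- is where the main technical difficulty lies, and it is the quantitative form of this balance that pins down the admissible growth $\Tfin_\eps=\mathscr O(1/\eps^\nu)$ with $0\le\nu<2/3$.

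Finally, to conclude: on $G_N$ take $\lambda_\eps\in\Lambda_{\Tfin_\eps}$ to be the piecewise-linear time change fixing every integer and sending $\tau_n^\eps\mapsto t_n$; then $\gamma_{\Tfin_\eps}(\lambda_\eps)\le C\max_{n\le N}|\tau_n^\eps-t_n|$ and $\sup_t r\bigl(Z^\eps_t,z(\lambda_\eps(t))\bigr)\le C\bigl(\sup_n|E_n|+\max_n|\tau_n^\eps-t_n|+\eps\max_n\Xi_n\bigr)$ (the $Y^\eps$/$y$ components match exactly by construction of $\lambda_\eps$), so $d_{[0,\Tfin_\eps]}(Z^\eps,z)=\mathscr O(\eps\sqrt{\log\Tfin_\eps})$ there; on $G_N^c$ take $\lambda\equiv\mathrm{id}$ to get the crude bound $d_{[0,\Tfin_\eps]}(Z^\eps,z)\le d^u_{[0,\Tfin_\eps]}(Z^\eps,z)\le C$, using that $X^\eps$ stays bounded on $[0,\Tfin_\eps]$ (it cannot escape the drift in the {\sc on} state and it decays in the {\sc off} state). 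Splitting the expectation over $G_N$ and $G_N^c$, $\BE\bigl[(d_{[0,\Tfin_\eps]}(Z^\eps,z))^p\bigr]\le C(\eps\sqrt{\log\Tfin_\eps})^p+C\,\BP(G_N^c)\to0$, which is \eqref{E:conv-in-Lp}.
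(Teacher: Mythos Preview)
Your approach is correct in outline but genuinely different from the paper's, and in fact sharper. The paper does \emph{not} use the contraction $|f'(x^*)|<1$ at all. Instead it introduces an auxiliary scale $\delta=\eps^\varsigma$ with $\nu<\varsigma<1$, defines the good event $G_n^{\eps,\delta}=\{|\tau_k^\eps-t_k|\le\delta,\ k\le n\}$, and shows (using only the transversality constant $\mu=\beta-(\alpha_\aon+\alpha_\aoff)x_\refer>0$ from Assumption~\ref{A:assumptions_real}) that $\BP(G_{n-1}^{\eps,\delta}\setminus G_n^{\eps,\delta})\le 3\normaltail(K\delta/\eps)$. On the good event a piecewise-linear time change gives the pathwise bound $d_{[0,\Tfin_\eps]}(Z^\eps,z)^p\le C_p\bigl((\Tfin_\eps\delta)^p+\delta^p+1_{G^c}+\eps^p\Tfin_\eps^p\sup_{[0,\Tfin_\eps]}|W|^p\bigr)$; after taking expectations the last term becomes $\eps^p\Tfin_\eps^{3p/2}$, and \emph{this} is where $\nu<2/3$ enters. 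By contrast, your contraction-based recursion keeps the errors at the true $\eps$ scale, your $\gamma$ bound $\gamma_{\Tfin_\eps}(\lambda_\eps)\le C\max_n|\tau_n^\eps-t_n|$ is tighter than the paper's (which carries an extra factor $\Tfin_\eps$), and your final estimate $(\eps\sqrt{\log\Tfin_\eps})^p+\Tfin_\eps e^{-c/\eps^2}$ vanishes for \emph{every} polynomial rate of growth of $\Tfin_\eps$. So your assertion that the bookkeeping ``pins down'' $\nu<2/3$ is misplaced: nothing in your argument requires it, and you would in fact prove more than the stated theorem.

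Two technical points need patching. First, on $G_N^c$ you cannot assert $d_{[0,\Tfin_\eps]}(Z^\eps,z)\le C$: the process $X^\eps$ is not pathwise bounded by a deterministic constant, since the OU-type noise $\eps e^{-\alpha_\aon t}(I_t-I_{\sigma_{n-1}^\eps})$ in the {\sc on} phase has unbounded range. Bound instead by $C(1+\eps\max_n\Xi_n)$ and use Cauchy--Schwarz, $\BE[d^p 1_{G_N^c}]\le\BE[d^{2p}]^{1/2}\BP(G_N^c)^{1/2}$; the exponential smallness of $\BP(G_N^c)$ absorbs the polynomial moment. Second, your definition $G_N=\bigcap_n(B_n^c\cap\{X^\eps_{n-1}\in U\})$ together with ``once $\sup|E_n|$ is small, remaining in $U$ is automatic'' is circular as written: membership in $U$ is used to derive smallness of $|E_n|$, which is then used to deduce membership in $U$. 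A clean fix is to take $G_N=\{\eps\max_{n\le N}\Xi_n\le\eta_0\}$ for a fixed small $\eta_0$ and prove by induction on $n$ that, on $G_N$, one has $X^\eps_{n}\in U$ and $|E_n|\le C'\eps\max_{k\le n}\Xi_k$; the bound $\BP(G_N^c)\le N e^{-c/\eps^2}$ then follows directly from Gaussian tails.
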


\begin{remark}\label{R:conv-in-prob}
Of course, Theorem \ref{T:FLLN} implies that $d_{[0,\Tfin_\eps]}(Z^\eps,z)$ converges to $0$ in probability, i.e., for any $\vartheta>0$, we have
$\lim_{\eps \searrow 0} \BP \left\{ d_{[0,\Tfin_\eps]}(Z^\eps,z) \ge \vartheta \right\}=0$.
\end{remark}

To explain the intuition behind Theorem \ref{T:FLLN}, we note that when $\eps \ll 1$, the likely behaviour of $X^\eps_t$ is to closely track $x(t)$. Therefore, one expects that with high probability, we have $\tau_n^\eps \approx t_n$, $\sigma^\eps_n=s_n = n$ for each $1 \le n \le \Tfin_\eps$ (at least if $\Tfin_\eps$ is not too large). On this ``good" event, a random time-deformation $\lambda$, for which $\gamma_{\Tfin_\eps}(\lambda)$ is small, can be used to align the jumps of $Y^\eps_t$ and $y(t)$ so that $Y^\eps_{\lambda(t)} \equiv y(t)$. Continuity now ensures that $X^\eps_{\lambda(t)}$ is close to $x(t)$, and we get that $d_{[0,\Tfin_\eps]}(Z^\eps,z)$ can be bounded above by a term which goes to zero as $\eps \searrow 0$. It now remains to show that the probability of the complement of this event, i.e., the event where one or more of the $\tau_n^\eps$ differ from $t_n$ by a significant amount, is small. 

Our thoughts are organised as follows. First, we introduce an additional scale $\delta \searrow 0$ to quantify proximity of $\tau^\eps_n$ to $t_n$; we will later take $\delta = \eps^\varsigma$ for suitable $\varsigma>0$. Now, for $\eps,\delta \in (0,1)$, set $G_0^{\eps,\delta} \triangleq \Omega$, and for $n \ge 1$, define
\begin{equation*}
\begin{aligned}
G_n^{\eps,\delta} & \triangleq \{\omega \in G_{n-1}^{\eps,\delta}: |\tau_n^\eps(\omega)-t_n| \le \delta\}\\
B_n^{\eps,\delta} & \triangleq \{\omega \in G_{n-1}^{\eps,\delta}: |\tau_n^\eps(\omega)-t_n| > \delta\} = G_{n-1}^{\eps,\delta} \setminus G_n^{\eps,\delta}.
\end{aligned}
\end{equation*}
Note that the $G_n^{\eps,\delta}$'s are decreasing, i.e., $G_0^{\eps,\delta} \supset G_1^{\eps,\delta} \supset \dots$ and that the $B_n^{\eps,\delta}$'s are pairwise disjoint. Consequently, $ \cap_{n=1}^{\Tfin_\eps} G_n^{\eps,\delta}=G_{\Tfin_\eps}^{\eps,\delta} $ and $\BP \left( \cup_{n=1}^{\Tfin_\eps} B_n^{\eps,\delta} \right) = \sum_{n=1}^{\Tfin_\eps} \BP \left( B_n^{\eps,\delta} \right)$. 

We now outline the principal steps in proving Theorem \ref{T:FLLN}. First, in Proposition \ref{P:pth_power_estimate}, we derive a path-wise estimate for $d_{[0,\Tfin_\eps]}(Z^\eps,z)$ and its positive powers. This result assures us that our quantity of interest is indeed small on the event $G_{\Tfin_\eps}^{\eps,\delta}$ and of order $1$ on its complement. Then, in Proposition \ref{P:bn}, we obtain an upper bound on $\BP\left( B_n^{\eps,\delta} \right)$ in terms of the tail of the standard normal distribution. These two propositions enable us to complete the proof of Theorem \ref{T:FLLN}. Both Propositions \ref{P:pth_power_estimate} and \ref{P:bn} are proved through a series of Lemmas; the proofs of the latter are deferred to Section \ref{S:Proofs}.

We start by introducing some notation. Let $\mathsf{t}_\aon \triangleq t_n-s_{n-1}$ and $\mathsf{t}_\aoff \triangleq s_n-t_n$ denote the fractions of time in each interval $[n,n+1]$ for which the deterministic system is in the {\sc on} and {\sc off} states respectively, and let $\mathsf{t}_\sfmin \triangleq \mathsf{t}_\aon \wedge \mathsf{t}_\aoff$.

\begin{proposition}\label{P:pth_power_estimate}
For every $p>0$, there exists a constant $C_p>0$ such that for all $\eps,\delta \in (0,1)$, $\Tfin_\eps \in \BN$ satisfying $0 <\delta \le \mathsf{t}_\sfmin/(4\Tfin_\eps)$, we have
\begin{equation}\label{E:pth_power_estimate}
(d_{[0,\Tfin_\eps]}(Z^\eps,z))^p \le C_p \left\{ (\Tfin_\eps \delta)^p + \delta^p + 1_{\Omega\setminus G_{\Tfin_\eps}^{\eps,\delta}} + \eps^p \thinspace \Tfin_\eps^p \sup_{0 \le t \le \Tfin_\eps} |W_t|^p \right\}.
\end{equation}
\end{proposition}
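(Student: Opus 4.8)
The plan is to establish the path-wise bound \eqref{E:pth_power_estimate} by working directly on the two complementary events $G_{\Tfin_\eps}^{\eps,\delta}$ and $\Omega \setminus G_{\Tfin_\eps}^{\eps,\delta}$. On the bad event, the bound is trivial: since $d_{[0,\Tfin_\eps]}(Z^\eps,z) \le d^u_{[0,\Tfin_\eps]}(Z^\eps,z)$ and the $y$-components differ by at most $1$ in absolute value while the $x$-components can be controlled crudely (say by a deterministic constant plus $\eps \sup_{t} |I_t|$, hence by $\eps \sup_t |W_t|$ up to constants), one gets that $(d_{[0,\Tfin_\eps]}(Z^\eps,z))^p$ is bounded by $C_p$ times $1_{\Omega \setminus G_{\Tfin_\eps}^{\eps,\delta}}$ plus the last term, after absorbing absolute constants. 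So the real content is the estimate on $G_{\Tfin_\eps}^{\eps,\delta}$, where by definition $|\tau_n^\eps - t_n| \le \delta$ for all $1 \le n \le \Tfin_\eps$.

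On $G_{\Tfin_\eps}^{\eps,\delta}$, the first step is to check that the constraint $\delta \le \sft_\sfmin/(4\Tfin_\eps)$ guarantees $\sigma_n^\eps = s_n = n$ for every $n \le \Tfin_\eps$: since $\tau_n^\eps$ stays within $\delta$ of $t_n$, and $t_n$ is separated from the integers by at least $\sft_\aoff \ge \sft_\sfmin$ below (the clock pulse at $s_n = n$) and $\sft_\aon \ge \sft_\sfmin$ from the previous pulse, and since $\delta$ is far smaller than $\sft_\sfmin$, the time $\tau_n^\eps$ lands strictly inside the same unit interval $[n-1,n)$ as $t_n$, so the next integer after $\tau_n^\eps$ is still $n$. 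This pins down all the {\sc off}$\to${\sc on} switching times and makes the cycle structure of $Z^\eps$ match that of $z$. Next I would construct an explicit time change $\lambda \in \Lambda_{\Tfin_\eps}$ that is the identity at every integer and is piecewise linear, mapping the deterministic switching time $t_n$ to the random time $\tau_n^\eps$ on each interval $[n-1,n]$; concretely $\lambda$ is linear on $[n-1,t_n]$ and on $[t_n, n]$ with the prescribed endpoint values. Because $|\tau_n^\eps - t_n| \le \delta$ and the relevant interval lengths $\sft_\aon, \sft_\aoff$ are bounded below by $\sft_\sfmin > 4\delta$, the slopes of $\lambda$ are within a factor $1 \pm O(\delta/\sft_\sfmin)$ of $1$, so $\gamma_{\Tfin_\eps}(\lambda) \le C \delta / \sft_\sfmin \le C' \delta$ (here using $\log(1+u) \le u$ and $|\log(1-u)| \le 2u$ for small $u$); this is the $\delta^p$ contribution.

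With this $\lambda$ fixed, the point is that $Y^\eps_{\lambda(t)} = y(t)$ identically on $[0,\Tfin_\eps]$, since $\lambda$ was built precisely to align the jump times, so the $y$-part of $r(z(t), Z^\eps_{\lambda(t)})$ vanishes and we are left to bound $\sup_t |x(t) - X^\eps_{\lambda(t)}|$. I would estimate this cycle by cycle using the explicit formulas \eqref{E:det_state_pieces}, \eqref{E:stoch_state_pieces}: within an {\sc on} phase the difference satisfies a linear recursion with contractive factor $e^{-\alpha_\aon(\cdot)}$ driven by the stochastic integral term $\eps e^{-\alpha_\aon t}(I_t - I_{\sigma_{n-1}^\eps})$ plus the mismatch coming from $|\tau_n^\eps - t_n| \le \delta$ and from the initial-value discrepancy carried over from the previous cycle; within an {\sc off} phase the dynamics are deterministic and contractive, but one picks up an error of order $\delta$ (times a Lipschitz constant, using that $x_\refer e^{-\alpha_\aoff(\cdot)}$ is Lipschitz) because the {\sc on}$\to${\sc off} switch happens at $\tau_n^\eps$ rather than $t_n$. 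Iterating over $n \le \Tfin_\eps$ cycles, the per-cycle errors of size $O(\delta) + O(\eps \sup_t|I_t|)$ can in the worst case accumulate linearly, yielding $\sup_t |x(t) - X^\eps_{\lambda(t)}| \le C\{\Tfin_\eps \delta + \eps \Tfin_\eps \sup_{t \le \Tfin_\eps}|W_t|\}$, after bounding $\sup_t |I_t| = \sup_t |\int_0^t e^{\alpha_\aon u}dW_u|$ in terms of $\sup_t e^{\alpha_\aon t}|W_t|$ via integration by parts — but the factor $e^{\alpha_\aon \Tfin_\eps}$ is a constant times $e^{\alpha_\aon \mathfrak{T}/\eps^\nu}$, which is \emph{not} bounded, so I need to be slightly more careful: the exponential growth of $I_t$ is compensated by the $e^{-\alpha_\aon t}$ prefactor in \eqref{E:stoch_state_pieces}, so what actually enters is $\sup_t e^{-\alpha_\aon t}|I_t - I_{\sigma_{n-1}^\eps}|$, which after integration by parts on each cycle $[\sigma_{n-1}^\eps, t]$ of length $\le 1$ is bounded by $C \sup_{t \le \Tfin_\eps}|W_t|$ with a constant depending only on $\alpha_\aon$. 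Combining the $\gamma_{\Tfin_\eps}(\lambda) \le C\delta$ bound with this sup bound and taking $p$-th powers (using $(a+b+c)^p \le C_p(a^p+b^p+c^p)$) gives exactly \eqref{E:pth_power_estimate}.

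The main obstacle I anticipate is the cycle-to-cycle propagation of error in the {\sc on}/{\sc off} concatenation: one must verify that the initial-condition discrepancy entering cycle $n$ is genuinely damped (by the contraction $e^{-\alpha_\aon \sft_\aon}$ in {\sc on} and $e^{-\alpha_\aoff \sft_\aoff}$ in {\sc off}, whose product is related to $|f'(x^*)| < 1$ from Proposition \ref{P:fixed_point}) while the fresh noise/timing errors injected each cycle are $O(\delta + \eps\sup|W|)$; establishing the right bookkeeping so that the total is $O(\Tfin_\eps(\delta + \eps\sup|W|))$ rather than something that blows up exponentially in $\Tfin_\eps$ requires care, and making the constant $C_p$ genuinely independent of $\eps, \delta, \Tfin_\eps$ (using only $0 < \delta \le \sft_\sfmin/(4\Tfin_\eps)$) is the delicate part. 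A secondary technical point is checking that the {\sc off}-phase map $x \mapsto x_\refer e^{-\alpha_\aoff(\cdot)}$ composed with the passage-time shift is Lipschitz uniformly in the relevant range, and that the passage time $\tau_n^\eps$ being within $\delta$ of $t_n$ translates into an $O(\delta)$ error in the state at the end of the {\sc off} phase; this uses positivity of the slope of $\sfx^{n,\aon}$ at $t_n$, i.e. that the deterministic trajectory crosses $x_\refer$ transversally, which follows from $-\alpha_\aon x_\refer + \beta > 0$ (guaranteed since $x_\refer < \beta/\alpha_\aon$).
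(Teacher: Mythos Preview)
Your approach is essentially the paper's: the same piecewise-linear time change $\lambda$ aligning $t_n$ with $\tau_n^\eps$ on each unit interval, the same split into $G_{\Tfin_\eps}^{\eps,\delta}$ and its complement, and the same cycle-by-cycle estimation of $|X^\eps_{\lambda(t)}-x(t)|$. Two points where the details diverge are worth noting. First, your bound $\gamma_{\Tfin_\eps}(\lambda)\le C\delta$ is actually sharper than the paper's Lemma~\ref{L:time_deformation_norm}, which gives $4\Tfin_\eps\delta/\sft_\sfmin$; your implicit observation that the incremental ratio of a piecewise-linear map lies between the minimum and maximum slope is correct and avoids the cruder triangle-inequality accumulation used there. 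Second, and more to the point, the ``main obstacle'' you flag---cycle-to-cycle error propagation requiring the contraction $|f'(x^*)|<1$---does not arise. Both $X^\eps$ and $x$ reach the level $x_\refer$ \emph{exactly} at their respective {\sc on}$\to${\sc off} switches, so the {\sc off} phase starts from the identical value $x_\refer$ in both systems; the only discrepancy at the start of cycle $n$ is the timing error $|\tau_{n-1}^\eps-t_{n-1}|\le\delta$, giving $|\sfX^{n-1,\eps,\aoff}_{\sigma_{n-1}^\eps}-\sfx^{n-1,\aoff}(s_{n-1})|\le x_\refer\alpha_\aoff\delta$ afresh each cycle, with no accumulation. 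The paper's Lemma~\ref{L:full_state_close} therefore gets $\sup_t|X^\eps_{\lambda_t}-x(t)|=O(\delta)+O(\eps\Tfin_\eps\sup|W|)$ on the good event without ever invoking stability of the periodic orbit. In effect the sources of the $(\Tfin_\eps\delta)^p$ and $\delta^p$ terms are swapped between your argument and the paper's, but the final inequality \eqref{E:pth_power_estimate} is the same.
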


To prove Proposition \ref{P:pth_power_estimate}, we will employ the (random) time-deformation $\lambda^\eps:\Omega \to \Lambda_{\Tfin_\eps}$ defined by 
\begin{multline}\label{E:time_deformation_a}
\lambda^\eps_t (\omega) \triangleq \sum_{n \ge 1} 1_{[s_{n-1},t_n)}(t) \cdot \left\{ s_{n-1} + \left(  \frac{\tau_n^\eps(\omega)-s_{n-1}}{t_n-s_{n-1}}\right) (t-s_{n-1}) \right\}\\ + \sum_{n \ge 1} 1_{[t_n,s_n)}(t) \cdot \left\{ \tau_n^\eps(\omega) + \left(  \frac{s_n-\tau_n^\eps(\omega)}{s_n-t_n}\right) (t-t_n) \right\} \qquad \text{if $\omega \in G_{\Tfin_\eps}^{\eps,\delta}$,}
\end{multline}
and
\begin{equation}\label{E:time_deformation_b}
\lambda^\eps_t (\omega) \triangleq t \qquad \text{if $\omega \in \Omega\setminus G_{\Tfin_\eps}^{\eps,\delta}$.}
\end{equation}
Note that in actuality, $\lambda^\eps=\lambda^{\eps,\delta}$. However, we have suppressed the $\delta$-dependence to reduce clutter and also because we will eventually take $\delta=\delta(\eps)$. 
The first step is to show that $\gamma_{\Tfin_\eps}(\lambda^\eps)$ is small; this is accomplished in Lemma \ref{L:time_deformation_norm} below. Next, in Lemma \ref{L:full_state_close}, we estimate $\sup_{0 \le t \le \Tfin_\eps} | X^\eps_{\lambda^\eps_t}-x(t) |^p$ and $\sup_{0 \le t \le \Tfin_\eps} | Y^\eps_{\lambda^\eps_t}-y(t) |^p$ for $p>0$. 

\begin{lemma}\label{L:time_deformation_norm}
Let $\Tfin_\eps \in \BN$. If $0 <\delta \le \mathsf{t}_\sfmin/(4\Tfin_\eps)$, then for each $\omega \in \Omega$, we have
\begin{equation}\label{E:time_deformation_norm}
\gamma_{\Tfin_\eps}(\lambda^\eps(\omega)) \le \frac{4\Tfin_\eps \delta}{\mathsf{t}_\sfmin}.
\end{equation}
\end{lemma}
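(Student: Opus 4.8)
The plan is to work directly from the piecewise-linear definition of $\lambda^\eps$ in \eqref{E:time_deformation_a}. On the event $\Omega \setminus G_{\Tfin_\eps}^{\eps,\delta}$ the map is the identity, so $\gamma_{\Tfin_\eps}(\lambda^\eps) = 0$ and there is nothing to prove; thus fix $\omega \in G_{\Tfin_\eps}^{\eps,\delta}$ once and for all. On this event $|\tau_n^\eps - t_n| \le \delta$ for all $1 \le n \le \Tfin_\eps$, and in particular (since $\delta \le \mathsf{t}_\sfmin/(4\Tfin_\eps) \le \mathsf{t}_\sfmin/4$) the perturbed switching times $\tau_n^\eps$ still interlace with the clock times $s_n = n$ in the correct order, so $\lambda^\eps$ is a genuine element of $\tilde\Lambda_{\Tfin_\eps}$: it is continuous, strictly increasing, and fixes $0$ and $\Tfin_\eps$. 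The key observation is that $\lambda^\eps$ is piecewise linear, with breakpoints only at the deterministic switching times $s_{n-1}$ and $t_n$, and on each such subinterval its slope is an explicit ratio.

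Next I would compute those slopes. On $[s_{n-1}, t_n)$ the slope is $(\tau_n^\eps - s_{n-1})/(t_n - s_{n-1}) = (\tau_n^\eps - s_{n-1})/\mathsf{t}_\aon$, and since $|\tau_n^\eps - t_n| \le \delta$ this slope lies in $[1 - \delta/\mathsf{t}_\aon,\, 1 + \delta/\mathsf{t}_\aon]$; similarly on $[t_n, s_n)$ the slope is $(s_n - \tau_n^\eps)/(s_n - t_n) = (s_n - \tau_n^\eps)/\mathsf{t}_\aoff \in [1 - \delta/\mathsf{t}_\aoff,\, 1 + \delta/\mathsf{t}_\aoff]$. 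Hence every slope of $\lambda^\eps$ lies in $[1 - \delta/\mathsf{t}_\sfmin,\, 1 + \delta/\mathsf{t}_\sfmin]$. Because $\lambda^\eps$ is piecewise linear, for any $0 \le s < t \le \Tfin_\eps$ the difference quotient $(\lambda^\eps_t - \lambda^\eps_s)/(t-s)$ is a convex combination (weighted by subinterval lengths) of these slopes, hence also lies in $[1 - \delta/\mathsf{t}_\sfmin,\, 1 + \delta/\mathsf{t}_\sfmin]$. Therefore
\[
\gamma_{\Tfin_\eps}(\lambda^\eps) = \sup_{0 \le s < t \le \Tfin_\eps} \left| \log \frac{\lambda^\eps_t - \lambda^\eps_s}{t - s} \right| \le \left| \log\left(1 - \tfrac{\delta}{\mathsf{t}_\sfmin}\right) \right| \vee \left| \log\left(1 + \tfrac{\delta}{\mathsf{t}_\sfmin}\right) \right|,
\]
using $\delta/\mathsf{t}_\sfmin \le 1/(4\Tfin_\eps) < 1$ so that the logarithm is defined.

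Finally I would convert this into the stated linear bound via an elementary inequality: for $0 \le u \le 1/2$ one has $|\log(1 \pm u)| \le 2u$ (indeed $-\log(1-u) \le 2u$ on $[0,1/2]$, and $\log(1+u) \le u \le 2u$ always). Since $u \triangleq \delta/\mathsf{t}_\sfmin \le 1/(4\Tfin_\eps) \le 1/4 \le 1/2$, this gives $\gamma_{\Tfin_\eps}(\lambda^\eps(\omega)) \le 2\delta/\mathsf{t}_\sfmin \le 4\Tfin_\eps\delta/\mathsf{t}_\sfmin$, which is even stronger than \eqref{E:time_deformation_norm}; the weaker stated form is presumably chosen for later convenience. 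I do not anticipate a genuine obstacle here — the argument is a direct computation — but the one point requiring a little care is verifying that $\lambda^\eps$ really is monotone and order-preserving on $G_{\Tfin_\eps}^{\eps,\delta}$, i.e. that the perturbed times $\tau_n^\eps$ do not cross the clock times; this is exactly what the hypothesis $\delta \le \mathsf{t}_\sfmin/(4\Tfin_\eps)$ guarantees, and it should be stated explicitly before manipulating the difference quotients.
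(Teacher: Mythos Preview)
Your argument is correct and in fact sharper than the paper's. Both proofs begin identically (identity map on $\Omega\setminus G_{\Tfin_\eps}^{\eps,\delta}$, piecewise-linear structure on $G_{\Tfin_\eps}^{\eps,\delta}$, slopes within $\delta/\mathsf{t}_\sfmin$ of $1$), but diverge at the step of bounding a general difference quotient $(\lambda^\eps_t-\lambda^\eps_s)/(t-s)$. The paper uses the triangle inequality to write $|\lambda^\eps_t-\lambda^\eps_s-(t-s)|$ as a sum over the (at most $2\Tfin_\eps$) linear pieces between $s$ and $t$, accumulating the per-piece deviations to obtain the bound $2\Tfin_\eps\delta/\mathsf{t}_\sfmin$ on $|(\lambda^\eps_t-\lambda^\eps_s)/(t-s)-1|$; this is precisely why the hypothesis carries the factor $\Tfin_\eps$, since one then needs $2\Tfin_\eps\delta/\mathsf{t}_\sfmin \le 1/2$ to invoke $|\log(1\pm x)|\le 2|x|$. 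Your convex-combination observation avoids this accumulation entirely: any secant slope of a piecewise-linear function is a weighted average of the piece slopes, so it inherits the \emph{same} bound $\delta/\mathsf{t}_\sfmin$, yielding $\gamma_{\Tfin_\eps}(\lambda^\eps)\le 2\delta/\mathsf{t}_\sfmin$, independent of $\Tfin_\eps$. Your route is more elementary and gives a genuinely stronger conclusion; the paper's cruder estimate is of course still sufficient for the subsequent application, where $\Tfin_\eps\delta \to 0$ anyway.
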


\begin{lemma}\label{L:full_state_close}
For every $p>0$, there exists a constant $c_p>0$ such that for all $\eps,\delta \in (0,1)$, we have
\begin{equation}\label{E:full_state_close_ms}
\begin{aligned}
\sup_{0 \le t \le \Tfin_\eps} | X^\eps_{\lambda^\eps_t}-x(t) |^p & \le c_p \left\{ \delta^p\thinspace  1_{G_{\Tfin_\eps}^{\eps,\delta}} + 1_{\Omega\setminus G_{\Tfin_\eps}^{\eps,\delta}} + \eps^p \thinspace \Tfin_\eps^p \sup_{0 \le t \le \Tfin_\eps} |W_t|^p \right\},\\
\sup_{0 \le t \le \Tfin_\eps} |Y^\eps_{\lambda^\eps_t}-y(t)|^p & \le 1_{\Omega\setminus G_{\Tfin_\eps}^{\eps,\delta}}.
\end{aligned}
\end{equation}
\end{lemma}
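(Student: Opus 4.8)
\emph{Sketch of the argument.}

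\textbf{The $Y$-bound.} I would treat this first, since it is immediate. On $\Omega\setminus G_{\Tfin_\eps}^{\eps,\delta}$ we have $\lambda^\eps_t=t$ and $|Y^\eps_t-y(t)|\le 1$, because both quantities lie in $\{0,1\}$; this is exactly the claimed bound $1_{\Omega\setminus G_{\Tfin_\eps}^{\eps,\delta}}$. On $G_{\Tfin_\eps}^{\eps,\delta}$ I would prove the stronger identity $Y^\eps_{\lambda^\eps_t}=y(t)$ for all $t\in[0,\Tfin_\eps]$, so the supremum is $0$. The underlying structural fact, established by induction on $n$, is that on $G_{\Tfin_\eps}^{\eps,\delta}$ one has $\sigma_{n-1}^\eps=s_{n-1}=n-1$ and $\tau_n^\eps\in(n-1,n)$, hence $\sigma_n^\eps=s_n=n$, for every $1\le n\le\Tfin_\eps$ (using $|\tau_n^\eps-t_n|\le\delta$, $t_n\in(n-1,n)$ and $\delta$ in the relevant regime $\delta<\mathsf{t}_{\sfmin}$, which is precisely where $\lambda^\eps$ in \eqref{E:time_deformation_a} is well defined and lies in $\Lambda_{\Tfin_\eps}$; this is the only range used, by Proposition \ref{P:pth_power_estimate}). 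Granting this, the first sum in \eqref{E:time_deformation_a} carries $[s_{n-1},t_n)$ increasingly onto $[\sigma_{n-1}^\eps,\tau_n^\eps)$, on which $Y^\eps\equiv 1 = y$, while the second carries $[t_n,s_n)$ onto $[\tau_n^\eps,\sigma_n^\eps)$, on which $Y^\eps\equiv 0 = y$; matching at the switching instants gives $Y^\eps\circ\lambda^\eps\equiv y$.

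\textbf{The $X$-bound off $G_{\Tfin_\eps}^{\eps,\delta}$.} Here $\lambda^\eps$ is the identity, and I would use a crude uniform a priori bound. The deterministic orbit obeys $x(t)\in[0,\beta/\alpha_\aon]$ for every $t$. From \eqref{E:stoch_state_pieces}, on any OFF leg $X^\eps\in(0,x_\refer]$ (it equals $x_\refer$ at the preceding passage time, by continuity, and then decays exponentially), while on any ON leg $X^\eps_t$ is a term bounded by $2\beta/\alpha_\aon$ plus $\eps\int_{\sigma}^{t}e^{-\alpha_\aon(t-u)}\,dW_u$, where $\sigma$ is the start of the ON leg containing $t$; by a stochastic integration by parts the latter is at most $c_\aon\,\eps\sup_{0\le s\le\Tfin_\eps}|W_s|$ for $t\le\Tfin_\eps$, with $c_\aon$ depending only on $\alpha_\aon$. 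Thus $|X^\eps_t-x(t)|\le C(1+\eps\sup_{0\le s\le\Tfin_\eps}|W_s|)$ on $[0,\Tfin_\eps]$; since $\Tfin_\eps\ge 1$, taking $p$-th powers produces the terms $c_p\,1_{\Omega\setminus G_{\Tfin_\eps}^{\eps,\delta}}$ and $c_p\,\eps^p\Tfin_\eps^p\sup_{0\le s\le\Tfin_\eps}|W_s|^p$.

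\textbf{The $X$-bound on $G_{\Tfin_\eps}^{\eps,\delta}$.} This is the substantive part, and I would carry it out cycle by cycle, using that the passage time resets $X^\eps_{\tau_n^\eps}=x_\refer$ exactly, so no deviation accumulates across cycles. (i) At a clock instant $n\le\Tfin_\eps$: $|X^\eps_n-x(n)|=x_\refer\bigl|e^{-\alpha_\aoff(n-\tau_n^\eps)}-e^{-\alpha_\aoff(n-t_n)}\bigr|\le x_\refer\alpha_\aoff|\tau_n^\eps-t_n|\le x_\refer\alpha_\aoff\,\delta$, with no $\eps$ and no growth in $n$, because the OFF leg is noise-free and anchored at $x_\refer$. (ii) On the $n$-th OFF leg $[t_n,s_n)$: $\lambda^\eps_t\in[\tau_n^\eps,\sigma_n^\eps)$ and $X^\eps_{\lambda^\eps_t}-x(t)=x_\refer\bigl(e^{-\alpha_\aoff(\lambda^\eps_t-\tau_n^\eps)}-e^{-\alpha_\aoff(t-t_n)}\bigr)$; the piecewise-linear form of $\lambda^\eps$ gives $|(\lambda^\eps_t-\tau_n^\eps)-(t-t_n)|\le|\tau_n^\eps-t_n|\le\delta$, hence $|X^\eps_{\lambda^\eps_t}-x(t)|\le x_\refer\alpha_\aoff\,\delta$. (iii) On the $n$-th ON leg $[s_{n-1},t_n)$: subtracting the representations in \eqref{E:det_state_pieces} and \eqref{E:stoch_state_pieces} yields three contributions — a term $|X^\eps_{s_{n-1}}-\beta/\alpha_\aon|\cdot\bigl|e^{-\alpha_\aon(\lambda^\eps_t-s_{n-1})}-e^{-\alpha_\aon(t-s_{n-1})}\bigr|\le(\beta/\alpha_\aon)\alpha_\aon|\lambda^\eps_t-t|\le\beta\,\delta$ (again $|\lambda^\eps_t-t|\le\delta$ from the linear form of $\lambda^\eps$); a term $|X^\eps_{s_{n-1}}-x(s_{n-1})|$ times a factor $\le 1$, which is $\le x_\refer\alpha_\aoff\,\delta$ by (i) applied at $s_{n-1}$ (and is $0$ when $s_{n-1}=0$, as $X^\eps_0=x^*=x(0)$); and the stochastic term $\eps\int_{s_{n-1}}^{\lambda^\eps_t}e^{-\alpha_\aon(\lambda^\eps_t-u)}\,dW_u$, bounded by $c_\aon\,\eps\sup_{0\le s\le\Tfin_\eps}|W_s|$ as above. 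Here the induction from the $Y$-bound step is used to identify $X^\eps_{s_{n-1}}$ with the OFF value of the previous cycle at the integer $s_{n-1}$, so that (i) applies. Combining (i)--(iii), on $G_{\Tfin_\eps}^{\eps,\delta}$ we obtain $\sup_{0\le t\le\Tfin_\eps}|X^\eps_{\lambda^\eps_t}-x(t)|\le C\bigl(\delta+\eps\sup_{0\le s\le\Tfin_\eps}|W_s|\bigr)\le C\bigl(\delta+\eps\Tfin_\eps\sup_{0\le s\le\Tfin_\eps}|W_s|\bigr)$; taking $p$-th powers and combining with the off-$G$ estimate yields \eqref{E:full_state_close_ms} with an appropriate $c_p$.

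\textbf{Main obstacle.} The estimate is not delicate — no accumulation over cycles occurs, precisely because each OFF leg re-anchors the stochastic path at $x_\refer$ — so the real work is bookkeeping: aligning the ON/OFF cycle indexing of $Z^\eps$ with that of $z$ (where the induction $\sigma_n^\eps=n$ on $G_{\Tfin_\eps}^{\eps,\delta}$ enters, tacitly needing $\delta$ small enough that $\tau_n^\eps\in(n-1,n)$), checking that $\lambda^\eps$ maps each ON/OFF subinterval onto the corresponding stochastic one, and verifying that the constants from the Lipschitz bounds on $e^{-\alpha_\aon\cdot}$, $e^{-\alpha_\aoff\cdot}$ and from the integration-by-parts estimate are genuinely independent of $n$ and $\Tfin_\eps$.
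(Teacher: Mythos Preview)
Your proof is correct and follows essentially the same route as the paper: on $G_{\Tfin_\eps}^{\eps,\delta}$ you align the {\sc on}/{\sc off} phases via $\lambda^\eps$, exploit the $x_\refer$-anchoring at each $\tau_n^\eps$ to prevent accumulation across cycles, bound the deterministic drift differences by $O(\delta)$ via Lipschitz estimates on the exponentials, and control the stochastic integral via integration by parts, while off $G_{\Tfin_\eps}^{\eps,\delta}$ you use a crude a priori bound. The paper packages the same estimates into a single global three-term decomposition $X^\eps_{\lambda^\eps_t} - x(t) = \mathsf{L}^{1,\eps}_t + \mathsf{L}^{2,\eps}_t + \mathsf{L}^{3,\eps}_t$ (deterministic {\sc on}, deterministic {\sc off}, stochastic) rather than proceeding cycle by cycle, but the underlying computations are identical---indeed your bound on the stochastic term is slightly sharper (no intrinsic $\Tfin_\eps$ factor), which you then discard to match the stated form.
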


Lemmas \ref{L:time_deformation_norm} and \ref{L:full_state_close} are proved in Section \ref{S:Proofs}. We now have

\begin{proof}[Proof of Proposition \ref{P:pth_power_estimate}]
Let $\lambda^\eps \in \Lambda_{\Tfin_\eps}$ be as in \eqref{E:time_deformation_a}, \eqref{E:time_deformation_b}. It is easily seen from \eqref{E:Sk_metric} that  \\
$(d_{[0,\Tfin_\eps]}(Z^\eps,z))^p \le 3^p \left\{ (\gamma_{\Tfin_\eps}(\lambda^\eps))^p + \sup_{0 \le t \le \Tfin_\eps} |X^\eps_{\lambda^\eps_t}-x(t)|^p + \sup_{0 \le t \le \Tfin_\eps} |Y^\eps_{\lambda^\eps_t}-y(t)|^p \right\}$.
The claim \eqref{E:pth_power_estimate} now follows from Lemmas \ref{L:time_deformation_norm}, \ref{L:full_state_close}.
\end{proof}

We now estimate $\BP\left( B_n^{\eps,\delta} \right)$. Let $\normaltail$ denote the right tail of the normal distribution; i.e., 
\begin{equation*}
\normaltail(x) \triangleq \frac{1}{\sqrt{2 \pi}} \int_x^\infty e^{-t^2/2} dt \qquad \text{for $x \ge 0$.}
\end{equation*}
A simple integration by parts yields 
\begin{equation}\label{E:gaussian_tail_estimate_1}
\normaltail(x) \le \frac{3}{\sqrt{2\pi}} x^2 e^{-x^2/2},  \qquad \text{for $x \ge 1$.} 
\end{equation}

\begin{proposition}\label{P:bn}
There exists $\delta_0 \in (0,1)$ and $K>0$ such that whenever $0<\delta<\delta_0$, $\eps \in (0,1)$, and $n \ge 1$, we have
\begin{equation}\label{E:prob_bn}
\BP\left( B_n^{\eps,\delta} \right) \le 3 \normaltail \left( K \frac{\delta}{\eps} \right).
\end{equation}
\end{proposition}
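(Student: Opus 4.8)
The plan is to estimate $\BP(B_n^{\eps,\delta})$ by conditioning on $G_{n-1}^{\eps,\delta}$ and exploiting the fact that, on this event, the starting current for the $n$-th {\sc on}-phase is close to the deterministic value $x(s_{n-1})=x^*$, so that the stochastic {\sc on}-trajectory $\sfX^{n,\eps,\aon}_t$ is a small Gaussian perturbation of the deterministic curve $\sfx^{n,\aon}(t)$. First I would fix $n$ and work on $G_{n-1}^{\eps,\delta}$. Writing $\sigma_{n-1}^\eps = s_{n-1} = n-1$ (which holds on $G_{n-1}^{\eps,\delta}$, since if all previous passage times are within $\delta<\mathsf{t}_\sfmin$ of the deterministic ones, no clock pulse is missed), the {\sc on}-trajectory decomposes as
\begin{equation*}
\sfX^{n,\eps,\aon}_t = \frac{\beta}{\alpha_\aon} + \Bigl(\sfX^{n-1,\eps,\aoff}_{s_{n-1}} - \frac{\beta}{\alpha_\aon}\Bigr) e^{-\alpha_\aon(t-s_{n-1})} + \eps\, M^n_t,
\end{equation*}
where $M^n_t \triangleq e^{-\alpha_\aon t}(I_t - I_{s_{n-1}})$ is a Gaussian process (time-changed Brownian motion) with an explicit variance bounded on $[s_{n-1},s_{n-1}+1]$. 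The event $\{|\tau_n^\eps - t_n| > \delta\}$ means the trajectory $\sfX^{n,\eps,\aon}$ either reaches $x_\refer$ before $t_n-\delta$ or has not reached it by $t_n+\delta$ (or by the clock pulse at $s_{n-1}+1$, but Assumption~\ref{A:assumptions_real} keeps $t_n$ safely inside the cycle). Since the deterministic curve crosses $x_\refer$ transversally at $t_n$ with strictly positive slope $\dot{\sfx}^{n,\aon}(t_n) = \beta - \alpha_\aon x_\refer > 0$ (positive by the lower bound on $\beta$), there is a constant $\rho>0$ such that $\sfx^{n,\aon}(t_n-\delta) \le x_\refer - \rho\delta$ and $\sfx^{n,\aon}(t_n+\delta) \ge x_\refer + \rho\delta$ for all small $\delta$. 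Hence $B_n^{\eps,\delta}$ forces
\begin{equation*}
\sup_{t\in[s_{n-1},s_{n-1}+1]} \Bigl| \Bigl(\sfX^{n-1,\eps,\aoff}_{s_{n-1}} - \frac{\beta}{\alpha_\aon}\Bigr) e^{-\alpha_\aon(t-s_{n-1})} - \Bigl(x^* - \frac{\beta}{\alpha_\aon}\Bigr) e^{-\alpha_\aon(t-s_{n-1})} + \eps M^n_t \Bigr| \ge \rho\delta,
\end{equation*}
i.e., either the initial-value discrepancy $|\sfX^{n-1,\eps,\aoff}_{s_{n-1}} - x^*|$ is itself of order $\delta$, or $\eps \sup_t |M^n_t| \gtrsim \delta$. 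The first alternative, by the Lipschitz/contraction structure of the maps defining $\sfX^{n-1,\eps,\aoff}$ in terms of $\sfX^{n-1,\eps,\aon}$ and ultimately of the Brownian increments over cycle $n-1$, is again absorbed into an event of the form $\{\eps \sup |W| \gtrsim \delta\}$ over the previous cycle.

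The upshot is a bound $\BP(B_n^{\eps,\delta}) \le \BP\{ \sup_{0\le s\le 1}|B_s| \ge K'\delta/\eps \}$ for a suitable Gaussian process $B$ (a fixed continuous martingale independent of $n$, by stationarity of the construction across cycles), hence by the reflection principle and the definition of $\normaltail$,
\begin{equation*}
\BP(B_n^{\eps,\delta}) \le 2\,\BP\{ |B_1| \ge K'\delta/\eps \} \le 3\,\normaltail(K\delta/\eps)
\end{equation*}
for an appropriate $K>0$ once $\delta/\eps$ is large enough; but since we only need the bound for $0<\delta<\delta_0$ and $\eps\in(0,1)$, and the inequality is trivial when $K\delta/\eps \le 1$ (the right side then exceeds $3\normaltail(1)$, and one enlarges $\delta_0$ or $K$ if needed so that the stated form holds — in fact the interesting regime is $\delta \gg \eps$, which is where $\Tfin_\eps\delta \to 0$ forces $\delta=\eps^\varsigma$ with $\varsigma<1$), the claimed inequality follows. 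The main obstacle I anticipate is the careful bookkeeping in the first alternative above: controlling how the error in the initial condition of cycle $n$ propagates backward through the {\sc off}-phase and {\sc on}-phase of cycle $n-1$ on the event $G_{n-1}^{\eps,\delta}$, uniformly in $n$, so that the resulting Gaussian-tail bound has a constant $K$ that does not degrade with $n$. This requires the contraction property $|f'(x^*)|<1$ from Proposition~\ref{P:fixed_point} (ensuring initial-condition errors do not amplify from cycle to cycle) together with explicit Lipschitz estimates for the passage-time map $x_0 \mapsto \tau^\eps$ near the transversal crossing, and is presumably where the bulk of the deferred proof in Section~\ref{S:Proofs} lies.
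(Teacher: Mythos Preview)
Your overall architecture---split $B_n^{\eps,\delta}$ into ``$\tau_n^\eps$ too early'' and ``$\tau_n^\eps$ too late'', use the transversal crossing of $x_\refer$ to create a gap of order $\delta$, and then invoke a Gaussian (reflection-principle) tail bound on the stochastic part $\eps M^n_t$---matches the paper's Lemmas~\ref{L:bnminus} and~\ref{L:bnplus}. Where you go astray is the handling of the initial-condition discrepancy $|\sfX^{n-1,\eps,\aoff}_{s_{n-1}}-x^*|$, which you flag as the ``main obstacle'' and propose to control by propagating backward through cycle $n-1$ with a stochastic bound and the contraction $|f'(x^*)|<1$.

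This is unnecessary and, as stated, would not yield an $n$-uniform constant. The point you are missing is that on $G_{n-1}^{\eps,\delta}$ the {\sc off} phase is \emph{deterministic}: once $\tau_{n-1}^\eps$ is known, $\sfX^{n-1,\eps,\aoff}_{s_{n-1}} = x_\refer\, e^{-\alpha_\aoff(s_{n-1}-\tau_{n-1}^\eps)}$, so the discrepancy is bounded \emph{pathwise} by $x_\refer\,\alpha_\aoff\,|\tau_{n-1}^\eps-t_{n-1}|\le x_\refer\,\alpha_\aoff\,\delta$. No recursion, no contraction, no previous-cycle Gaussian event is needed. The paper simply absorbs this deterministic $O(\delta)$ error into the transversality gap: the effective constant becomes $\mu \triangleq (\beta-\alpha_\aon x_\refer)-\alpha_\aoff x_\refer$, and the upper bound on $\alpha_\aoff$ in Assumption~\ref{A:assumptions_real} is exactly what guarantees $\mu>0$. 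With this observation, each of $\BP(B_n^{\eps,\delta,-})$ and $\BP(B_n^{\eps,\delta,+})$ is bounded directly by a single Gaussian tail $\normaltail(K_\pm\,\delta/\eps)$ with constants independent of $n$; the ``$+$'' case does not even need a supremum (evaluate at the single time $t_n+\delta$). So drop the backward-propagation plan and replace it with the one-line Lipschitz estimate above; the rest of your outline then goes through cleanly.
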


To prove this proposition, we write the event $B_n^{\eps,\delta}$ as the disjoint union $B_n^{\eps,\delta,-}\cup B_n^{\eps,\delta,+}$ where
$B_n^{\eps,\delta,-} \triangleq \{\omega \in G_{n-1}^{\eps,\delta}: \tau_n^\eps(\omega) < t_n - \delta \}$ and $B_n^{\eps,\delta,+} \triangleq \{\omega \in G_{n-1}^{\eps,\delta}: \tau_n^\eps(\omega) > t_n + \delta \}$.
The quantities $\BP( B_n^{\eps,\delta,-})$ and $\BP( B_n^{\eps,\delta,+})$ are now estimated separately in Lemmas \ref{L:bnminus} and \ref{L:bnplus} below, whose proofs are given in Section \ref{S:Proofs}.

\begin{lemma}\label{L:bnminus} 
There exists $K_->0$ such that for any $n \ge 1$, $\eps,\delta \in (0,1)$, we have
\begin{equation}\label{E:prob_bnminus}
\BP\left( B_n^{\eps,\delta,-} \right) \le 2 \normaltail \left( K_- \frac{\delta}{\eps} \right).
\end{equation}
\end{lemma}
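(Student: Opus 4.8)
The event $B_n^{\eps,\delta,-}$ requires $\tau_n^\eps < t_n - \delta$ while $\omega \in G_{n-1}^{\eps,\delta}$, i.e.\ the $\eps$-perturbed current reaches the reference level $x_\refer$ \emph{early}, by at least $\delta$. On $G_{n-1}^{\eps,\delta}$ we have $|\tau_{n-1}^\eps - t_{n-1}| \le \delta$ and (since the clock pulse is deterministic) $\sigma_{n-1}^\eps = s_{n-1} = n-1$; thus the $n$-th {\sc on}-phase starts exactly at $s_{n-1}$, and on this phase
\[
X^\eps_t = \sfx^{n,\aon}(t;\,\sfX^{n-1,\eps,\aoff}_{s_{n-1}}) + \eps\, e^{-\alpha_\aon t}\bigl(I_t - I_{s_{n-1}}\bigr),
\]
where the first term is the deterministic {\sc on}-solution issuing from the (random) value $\sfX^{n-1,\eps,\aoff}_{s_{n-1}}$. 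The plan is: (i) bound the discrepancy between $\sfX^{n-1,\eps,\aoff}_{s_{n-1}}$ and the deterministic $\sfx^{n-1,\aoff}(s_{n-1}) = x^*$ (or whatever the deterministic value is) in terms of the noise accumulated in the previous {\sc on}-phase restricted to $G_{n-1}^{\eps,\delta}$; (ii) translate ``$X^\eps$ hits $x_\refer$ before $t_n - \delta$'' into ``the Gaussian martingale term $\eps\,e^{-\alpha_\aon t}(I_t - I_{s_{n-1}})$, plus the $\mathcal{O}(\eps)$ deterministic-perturbation error, exceeds a fixed positive margin'' at some time in $[s_{n-1}, t_n-\delta]$; (iii) control that probability by a reflection/maximal inequality for the Gaussian martingale and bound it by the normal tail $\normaltail(K_-\delta/\eps)$.

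For step (ii), the key deterministic fact is that $\sfx^{n,\aon}$ approaches $x_\refer$ from below with a \emph{strictly positive} slope at $t_n$: since $\tfrac{d}{dt}\sfx^{n,\aon}(t_n) = -\alpha_\aon x_\refer + \beta > 0$ by the constraint $x_\refer < \beta/\alpha_\aon$, there is a constant $c_1 > 0$ (uniform in $n$, using periodicity of the deterministic orbit) such that $x_\refer - \sfx^{n,\aon}(t_n - \delta) \ge c_1\delta$ for all small $\delta$. Hence on $B_n^{\eps,\delta,-}$ the perturbation must make up at least $c_1\delta$ worth of ``height'' before time $t_n - \delta$, modulo the error from the perturbed initial value, which by step (i) is of size $\eps \cdot (\text{sup of }|W|\text{ on the previous phase})$ — and on $G_{n-1}^{\eps,\delta}$ this can be absorbed, or alternatively one conditions on $\filt_{s_{n-1}}$ and uses that the increment $I_t - I_{s_{n-1}}$ for $t \ge s_{n-1}$ is independent of $\filt_{s_{n-1}}$. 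The cleanest route is: condition on $\filt_{\sigma_{n-1}^\eps} = \filt_{s_{n-1}}$, note $B_n^{\eps,\delta,-}$ forces $\sup_{s_{n-1} \le t \le t_n - \delta} \eps\,e^{-\alpha_\aon t}|I_t - I_{s_{n-1}}| \ge c_1\delta - C\eps M$, where $M$ is the ($\filt_{s_{n-1}}$-measurable) previous-phase noise level bounded on $G_{n-1}^{\eps,\delta}$, and for $\delta$ small enough relative to $\eps$ this right side is $\ge c_2\delta$. Writing $\eps\,e^{-\alpha_\aon t}(I_t - I_{s_{n-1}}) = \eps\int_{s_{n-1}}^t e^{-\alpha_\aon(t-u)}dW_u$, this is a time-changed Brownian motion with quadratic variation bounded above by a constant times $(t - s_{n-1})$, so by Doob's maximal inequality / the reflection principle its running supremum exceeds $c_2\delta$ with probability at most $2\normaltail(c_2\delta / (\eps\sqrt{C'}))$, giving $K_- = c_2/\sqrt{C'}$.

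The main obstacle I anticipate is step (i) — handling the \emph{random} initial condition $\sfX^{n-1,\eps,\aoff}_{s_{n-1}}$ of the $n$-th {\sc on}-phase cleanly, since $B_n^{\eps,\delta,-}$ is defined relative to $G_{n-1}^{\eps,\delta}$ and one wants a bound uniform in $n$. The resolution should be that on $G_{n-1}^{\eps,\delta}$ the deterministic pieces stay within $\mathcal{O}(\delta)$ of the periodic orbit (this is presumably established in the lemmas feeding into Proposition \ref{P:pth_power_estimate}, or follows from Lipschitz dependence of the {\sc on}/{\sc off} flow maps on initial data) while the stochastic correction accumulated through phase $n-1$ is $\mathcal{O}(\eps)\cdot(\text{bounded Gaussian})$; conditioning on $\filt_{s_{n-1}}$ makes the fresh noise on phase $n$ independent of all of this, and one only needs that the \emph{conditional} probability bound holds pointwise on $G_{n-1}^{\eps,\delta}$, then integrate. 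A secondary technical point is ensuring the constants $c_1, c_2$ are genuinely $n$-independent, which follows from the deterministic system being exactly periodic (all phases after the first are translates of one fixed {\sc on}/{\sc off} cycle, by the fixed-point property in Proposition \ref{P:fixed_point}); the choice of $\delta_0$ — wait, $\delta_0$ belongs to Proposition \ref{P:bn}, not this lemma, so here no smallness of $\delta$ is imposed and the constant $K_-$ alone must do the job, which is fine since the bound $\normaltail(K_-\delta/\eps) \le 1$ is trivially true when $\delta/\eps$ is not large.
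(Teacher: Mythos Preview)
Your overall strategy---deterministic margin of order $\delta$ at time $t_n-\delta$, then a maximal inequality for the Gaussian martingale via time-change and reflection---is exactly what the paper does. But your step~(i) is the source of all your hedging, and it is where you are making life harder than necessary.

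The point you are missing is that the \textsc{off} phase carries \emph{no noise}: the {\sc sde} \eqref{E:sde-on} is active only when the switch is \textsc{on}. Hence on $G_{n-1}^{\eps,\delta}$ one has $\sigma_{n-1}^\eps=n-1$ and
\[
\sfX^{n-1,\eps,\aoff}_{\sigma_{n-1}^\eps}=x_\refer\,e^{-\alpha_\aoff((n-1)-\tau_{n-1}^\eps)},
\]
so that $|\sfX^{n-1,\eps,\aoff}_{\sigma_{n-1}^\eps}-x^*|\le x_\refer\,\alpha_\aoff\,|\tau_{n-1}^\eps-t_{n-1}|\le x_\refer\,\alpha_\aoff\,\delta=:\varkappa$, a \emph{deterministic} bound of size $\delta$. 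There is no ``$\eps\cdot(\text{sup of }|W|)$'' term, no bounded Gaussian to absorb, and no need to condition on $\filt_{s_{n-1}}$. The paper simply uses the envelope $a_n(t;x^*+\varkappa)\le a_n(t;x^*)+\varkappa$ and the exact computation
\[
x_\refer-a_n(t_n-\delta;x^*)-\varkappa
=(\beta/\alpha_\aon-x_\refer)(e^{\alpha_\aon\delta}-1)-x_\refer\alpha_\aoff\delta
\ge\bigl(\beta-(\alpha_\aon+\alpha_\aoff)x_\refer\bigr)\delta=:\mu\delta,
\]
where $\mu>0$ by the upper bound on $\alpha_\aoff$ in Assumption~\ref{A:assumptions_real}. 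This holds for \emph{all} $\delta\in(0,1)$---no smallness required---which is why the lemma carries no $\delta_0$ restriction. Your worry that ``for $\delta$ small enough relative to $\eps$'' might be needed is therefore unfounded (and would in fact be fatal, since in the end $\delta=\eps^\varsigma\gg\eps$). Once you have the clean inclusion
\[
B_n^{\eps,\delta,-}\subset\Bigl\{\sup_{t\in[n-1,\,t_n-\delta]}e^{-\alpha_\aon t}(I_t-I_{n-1})\ge \mu\delta/\eps\Bigr\},
\]
the time-change $I_t=V_{\langle I\rangle_t}$ and the reflection principle give \eqref{E:prob_bnminus} with $K_-=\mu\sqrt{2\alpha_\aon}\,e^{-\alpha_\aon t^*}$, exactly as you outline in step~(iii).
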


\begin{lemma}\label{L:bnplus} 
There exists $\delta_+ \in (0,1)$ and $K_+>0$ such that whenever $0 < \delta < \delta_+$, $\eps \in (0,1)$, $n \ge 1$, we have
\begin{equation}\label{E:prob_bnplus}
\BP\left( B_n^{\eps,\delta,+} \right) \le \normaltail \left( K_+ \frac{\delta}{\eps} \right). 
\end{equation}
\end{lemma}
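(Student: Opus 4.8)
The plan is to estimate $\BP(B_n^{\eps,\delta,+})$, the probability that, conditioned on the good event $G_{n-1}^{\eps,\delta}$ up to the previous switching time, the $n$-th passage time $\tau_n^\eps$ of $\sfX^{n,\eps,\aon}_t$ to the reference level $x_\refer$ overshoots $t_n$ by more than $\delta$. On $G_{n-1}^{\eps,\delta}$ we have $\sigma_{n-1}^\eps=s_{n-1}=n-1$, so the \textsc{on}-phase process has the explicit representation $\sfX^{n,\eps,\aon}_t = \sfx^{n,\aon}(t) + \eps\, e^{-\alpha_\aon t}\big(I_t - I_{s_{n-1}}\big)$ for $t \ge s_{n-1}$, where $\sfx^{n,\aon}$ is the deterministic \textsc{on}-trajectory hitting $x_\refer$ exactly at $t_n$. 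Thus $\{\tau_n^\eps > t_n+\delta\}$ forces $\sfX^{n,\eps,\aon}_t < x_\refer$ throughout $[s_{n-1}, t_n+\delta]$, and in particular at the single time $t_n+\delta$: this gives the inclusion
\begin{equation*}
B_n^{\eps,\delta,+} \subset G_{n-1}^{\eps,\delta} \cap \left\{ \sfx^{n,\aon}(t_n+\delta) - x_\refer < -\eps\, e^{-\alpha_\aon(t_n+\delta)}\big(I_{t_n+\delta}-I_{s_{n-1}}\big) \right\}.
\end{equation*}
Because $\sfx^{n,\aon}$ is strictly increasing at $t_n$ with strictly positive slope $-\alpha_\aon x_\refer + \beta > 0$ (positivity is guaranteed by the lower bound $2x_\refer\alpha_\aon < \beta$ in Assumption \ref{A:assumptions_real}), there is a constant $c>0$, depending only on the fixed parameters, with $\sfx^{n,\aon}(t_n+\delta) - x_\refer \ge c\,\delta$ for all small $\delta$ — here one uses that $t_n \in \{\sft_\aon\}$ is the same for every cycle, so $c$ is $n$-independent. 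Hence $B_n^{\eps,\delta,+}$ is contained in the event that $-\eps\, e^{-\alpha_\aon(t_n+\delta)}(I_{t_n+\delta}-I_{s_{n-1}}) > c\,\delta$, i.e. that a single centered Gaussian random variable exceeds $c\,\delta/\eps$ times a bounded factor.

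The second step is to control the conditional law of the Gaussian increment. The quantity $e^{-\alpha_\aon t}(I_t - I_{s_{n-1}}) = \int_{s_{n-1}}^{t} e^{-\alpha_\aon(t-u)}dW_u$ is, conditionally on $\filt_{s_{n-1}}$, a centered Gaussian with variance $\frac{1}{2\alpha_\aon}(1-e^{-2\alpha_\aon(t-s_{n-1})})$, bounded above by $1/(2\alpha_\aon)$ uniformly in $t$ and $n$. The event $G_{n-1}^{\eps,\delta}$ is $\filt_{s_{n-1}}$-measurable (it depends only on $W$ up to $\sigma_{n-1}^\eps = s_{n-1}$), and $I_{t_n+\delta}-I_{s_{n-1}}$ is independent of $\filt_{s_{n-1}}$; so by conditioning, $\BP(B_n^{\eps,\delta,+}) \le \BP(G_{n-1}^{\eps,\delta})\cdot \normaltail(K_+\delta/\eps) \le \normaltail(K_+\delta/\eps)$ with $K_+ = c\sqrt{2\alpha_\aon}$ (adjusting $c$ downward by the bounded factor $e^{-\alpha_\aon(t_n+\delta)}$, which is bounded below uniformly in $n$ since $t_n+\delta \le \sft_\aon + 1$). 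The threshold $\delta_+$ is chosen so that for $\delta < \delta_+$ the Taylor estimate $\sfx^{n,\aon}(t_n+\delta) - x_\refer \ge c\delta$ is valid (the slope stays bounded away from $0$ on a fixed neighbourhood of $t_n$).

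The main obstacle I anticipate is making the one-sided reduction honest: the inclusion uses only the value of $\sfX^{n,\eps,\aon}$ at the \emph{single} time $t_n+\delta$, which is the clean move, but one must be careful that on $G_{n-1}^{\eps,\delta}$ the process $\sfX^{n,\eps,\aon}$ genuinely governs the dynamics on all of $[s_{n-1}, t_n+\delta]$ — i.e. that no earlier passage has already occurred in a way that would make $\tau_n^\eps$ ill-defined relative to this formula. This is handled by the definition $\tau_n^\eps = \inf\{t > \sigma_{n-1}^\eps : \sfX^{n,\eps,\aon}_t = x_\refer\}$: the event $\{\tau_n^\eps > t_n+\delta\}$ says precisely that this infimum exceeds $t_n+\delta$, so $\sfX^{n,\eps,\aon}_t < x_\refer$ for $t \in (s_{n-1}, t_n+\delta]$ (using $\sfX^{n,\eps,\aon}_{s_{n-1}} = \sfX^{n-1,\eps,\aoff}_{s_{n-1}} < x_\refer$ on the good event and continuity), with no circularity. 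The remaining care is purely in tracking that all constants ($c$, the variance bound, the exponential factors $e^{-\alpha_\aon(t_n+\delta)}$) are uniform in $n$, which follows because the deterministic cycle is exactly periodic so $t_n - s_{n-1} = \sft_\aon$ for every $n$.
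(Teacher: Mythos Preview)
Your overall strategy---reduce $\{\tau_n^\eps > t_n+\delta\}$ to an inequality at the single time $t_n+\delta$, then bound the resulting Gaussian tail---is the same as the paper's. However, there is a genuine gap in the first step.

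Your representation $\sfX^{n,\eps,\aon}_t = \sfx^{n,\aon}(t) + \eps\, e^{-\alpha_\aon t}\big(I_t - I_{s_{n-1}}\big)$ on $G_{n-1}^{\eps,\delta}$ is incorrect. From \eqref{E:stoch_state_pieces}, the drift part of $\sfX^{n,\eps,\aon}_t$ is the deterministic \textsc{on}-trajectory started from the \emph{random} value $\sfX^{n-1,\eps,\aoff}_{\sigma_{n-1}^\eps}$, not from $x^*$. On $G_{n-1}^{\eps,\delta}$ one only has $\big|\sfX^{n-1,\eps,\aoff}_{\sigma_{n-1}^\eps} - x^*\big| \le \varkappa$ with $\varkappa \triangleq x_\refer\,\alpha_\aoff\,\delta$, so your formula is off by a term $(\sfX^{n-1,\eps,\aoff}_{\sigma_{n-1}^\eps}-x^*)\,e^{-\alpha_\aon(t-s_{n-1})}$ of order $\delta$. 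This is precisely the order of the overshoot you need: to first order, $\sfx^{n,\aon}(t_n+\delta)-x_\refer \approx (\beta-\alpha_\aon x_\refer)\,\delta$, so an uncontrolled $O(\delta)$ error in the initial condition can wipe out your lower bound $c\,\delta$ entirely.

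The paper handles this via the envelope \eqref{E:an_envelope}: the correct inclusion gives $a_n(t_n+\delta;x^*) - \varkappa - x_\refer$ on the left, and one must then verify that $a_n(t_n+\delta;x^*) - x_\refer$ exceeds $\varkappa = x_\refer\,\alpha_\aoff\,\delta$ by a positive multiple of $\delta$. This is exactly the requirement $\mu \triangleq \beta - (\alpha_\aon+\alpha_\aoff)\,x_\refer > 0$, which comes from the \emph{upper} bound on $\alpha_\aoff$ in \eqref{E:alpha_off_assumption_real}, not from the lower bound $2x_\refer\alpha_\aon<\beta$ you invoke. With this correction in place (and the explicit $\delta_+$ chosen so that the linearisation holds), your argument goes through with $K_+$ proportional to $\mu\sqrt{\alpha_\aon}$. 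Your conditioning/independence step is fine, though note that it is unnecessary: since $t_n+\delta$ and $n-1$ are deterministic times, one can simply drop the intersection with $G_{n-1}^{\eps,\delta}$ and use the unconditional law of the Gaussian $e^{-\alpha_\aon(t_n+\delta)}(I_{t_n+\delta}-I_{n-1})$.
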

 
We now provide 
\begin{proof}[Proof of Proposition \ref{P:bn}]
From Lemmas \ref{L:bnminus} and \ref{L:bnplus}, we take $K \triangleq K_-\wedge K_+$, $\delta_0 \triangleq \delta_+$. Now, noting that $\normaltail$ is strictly decreasing, we use \eqref{E:prob_bnminus} and \eqref{E:prob_bnplus} to get \eqref{E:prob_bn}. 
\end{proof}

Finally, we have

\begin{proof}[Proof of Theorem \ref{T:FLLN}]
Fix $p \in [1,\infty)$ and let $\delta \triangleq \eps^\varsigma$ where $\nu<\varsigma<1$. By the Burkholder-Davis-Gundy inequalities \cite[Theorem 3.3.28]{KS91}, there exists a universal positive constant $k_{p/2}$ such that 
$\BE \left[ \sup_{0 \le t \le \Tfin_\eps} | W_t |^p  \right] \le k_{p/2} \Tfin_\eps^{p/2}$. Noting that $\BE[1_{\Omega\setminus G_{\Tfin_\eps}^{\eps,\delta}}]=\sum_{n=1}^{\Tfin_\eps} \BP(B_n^{\eps,\delta})$, we see from Propositions \ref{P:pth_power_estimate} and \ref{P:bn} that for $\eps \in (0,1)$ small enough,\footnote{One can check that $0 < \eps < \left(\frac{\mathsf{t}_\sfmin}{4\mathfrak{T}}\right)^{1/(\varsigma-\nu)}\wedge \delta_{0}^{1/\varsigma}$ will suffice.}
\begin{equation*}
\BE[(d_{[0,\Tfin_\eps]}(Z^\eps,z))^p] \le C_p \left\{ \mathfrak{T}^p \eps^{(\varsigma-\nu)p}  + \eps^{\varsigma p} +  \frac{3 \mathfrak{T}}{\eps^\nu} \normaltail \left(K \frac{1}{\eps^{1-\varsigma}}\right) + k_{p/2} \mathfrak{T}^{3p/2} \eps^{(1-3\nu/2)p}  \right\}.
\end{equation*}
Since $0 \le \nu<2/3$ and $\nu<\varsigma<1$, straightforward calculations using \eqref{E:gaussian_tail_estimate_1} yield \eqref{E:conv-in-Lp}.
\end{proof}

\section{Proofs of Lemmas}\label{S:Proofs}


\begin{proof}[Proof of Lemma \ref{L:time_deformation_norm}]
For $\omega \in \Omega\setminus G_{\Tfin_\eps}^{\eps,\delta}$, we have $\gamma_{\Tfin_\eps}(\lambda^\eps(\omega))=0$, implying \eqref{E:time_deformation_norm}. So, fix  $\omega \in G_{\Tfin_\eps}^{\eps,\delta}$. Note that the function $\lambda^\eps_\cdot(\omega)$ is piecewise-linear with ``corners" at $0=s_0<t_1<s_1<\dots<t_{\Tfin_\eps}<s_{\Tfin_\eps}=\Tfin_\eps$. Since $\omega \in G_{\Tfin_\eps}^{\eps,\delta}$, we have $\tau_n^\eps(\omega) \in [t_n-\delta,t_n+ \delta]$ for $1 \le n \le \Tfin_\eps$. Recalling that $\lambda^\eps_{t_n}(\omega) = \tau_n^\eps(\omega)$, it is now easy to see that
\begin{equation}\label{E:time_def_aux_1}
\max_{1 \le n \le \Tfin_\eps} \left\{ \left|  \frac{\lambda^\eps_{t_n}(\omega)-\lambda^\eps_{s_{n-1}}(\omega)}{t_n-s_{n-1}} - 1 \right| \vee \left|  \frac{\lambda^\eps_{s_n}(\omega)-\lambda^\eps_{t_n}(\omega)}{s_n-t_n} - 1 \right|  \right\} \le \frac{\delta}{\mathsf{t}_\sfmin}.
\end{equation}

Now, let $0 \le s < t \le \Tfin_\eps$ and let $\{u_0,u_1,\dots,u_k\}$ be a sequential enumeration of all corners starting just to the left of $s$ and ending just to the right of $t$, i.e., $u_0 \le s < u_1 < \dots < u_{k-1} < t \le u_k$. By the triangle inequality, we have
\begin{multline*}
|\lambda^\eps_t(\omega)-\lambda^\eps_s(\omega) - (t-s)| \le |t-u_{k-1}| \left| \frac{\lambda^\eps_t(\omega)-\lambda^\eps_{u_{k-1}}(\omega)}{t-u_{k-1}} - 1 \right| \\+ \sum_{i=2}^{k-1}  |u_i-u_{i-1}| \left| \frac{\lambda^\eps_{u_i}(\omega)-\lambda^\eps_{u_{i-1}}(\omega)}{u_i-u_{i-1}} - 1 \right| +  |u_1-s| \left| \frac{\lambda^\eps_{u_1}(\omega)-\lambda^\eps_s(\omega)}{u_1-s} - 1 \right|.
\end{multline*}
Noting that $|t-u_{k-1}|, |u_{k-1}-u_{k-2}|,\dots,|u_1-s|$ are less than $|t-s|$, recalling the piecewise-linear nature of $\lambda^\eps_t(\omega)$, and using \eqref{E:time_def_aux_1}, we get
\begin{equation*}
\left|  \frac{\lambda^\eps_t(\omega)-\lambda^\eps_s(\omega)}{t-s} - 1 \right| 
 \le \sum_{i=1}^{k} \left|  \frac{\lambda^\eps_{u_i}(\omega)-\lambda^\eps_{u_{i-1}}(\omega)}{u_i-u_{i-1}} - 1 \right| \le \frac{2\Tfin_\eps \delta}{\mathsf{t}_\sfmin}.
\end{equation*}
Thus,
\begin{equation*}
\log \left( 1 - \frac{2\Tfin_\eps \delta}{\mathsf{t}_\sfmin} \right) \le \log \left( \frac{\lambda^\eps_t(\omega)-\lambda^\eps_s(\omega)}{t-s} \right) \le \log \left( 1+ \frac{2\Tfin_\eps \delta}{\mathsf{t}_\sfmin} \right). 
\end{equation*}
Using the estimate $|\log(1\pm x)| \le 2|x|$ for $|x| \le 1/2$ \cite[pp. 127]{ConvProbMeas}, we get that for $0<\delta \le \mathsf{t}_\sfmin/{4\Tfin_\eps}$, we have
\begin{equation*}
-\frac{4\Tfin_\eps \delta}{\mathsf{t}_\sfmin}  \le \log \left( \frac{\lambda^\eps_t(\omega)-\lambda^\eps_s(\omega)}{t-s} \right) \le \frac{4\Tfin_\eps\delta}{\mathsf{t}_\sfmin} .
\end{equation*}
Since $s,t$ are arbitrary, \eqref{E:time_deformation_norm} follows.
\end{proof}

\begin{proof}[Proof of Lemma \ref{L:full_state_close}]
We first bound $\sup_{0 \le t \le \Tfin_\eps} | X^\eps_{\lambda^\eps_t}-x(t) |^p$. 
Write 
$X^\eps_{\lambda^\eps_t}-x(t) = \sum_{i=1}^3 \mathsf{L}^{i,\eps}_t$
where 
\begin{equation*}
\begin{aligned}
\mathsf{L}^{1,\eps}_t & \triangleq \sum_{n \ge 1} 1_{[\sigma_{n-1}^\eps,\tau_n^\eps)}(\lambda^\eps_t) \cdot \left\{  \frac{\beta}{\alpha_\aon} + \left(\sfX^{n-1,\eps,\aoff}_{\sigma_{n-1}^\eps}- \frac{\beta}{\alpha_\aon}\right) e^{-\alpha_\aon (\lambda^\eps_t-\sigma_{n-1}^\eps)}   \right\}\\
& \phantom{\triangleq} - \sum_{n \ge 1} 1_{[s_{n-1},t_n)}(t) \cdot \left\{\frac{\beta}{\alpha_\aon} + \left(\sfx^{n-1,\aoff}(s_{n-1}) - \frac{\beta}{\alpha_\aon}\right) e^{-\alpha_\aon (t-s_{n-1})}\right\},\\
\mathsf{L}^{2,\eps}_t & \triangleq \sum_{n \ge 1} 1_{[\tau_{n}^\eps,\sigma^\eps_n)}(\lambda^\eps_t) \cdot x_\refer \medspace e^{-\alpha_\aoff(\lambda^\eps_t-\tau_{n}^\eps)} - \sum_{n \ge 1} 1_{[t_{n},s_n)}(t) \cdot x_\refer \medspace e^{-\alpha_\aoff(t-t_{n})}, \\
\mathsf{L}^{3,\eps}_t & \triangleq \sum_{n \ge 1} 1_{[\sigma_{n-1}^\eps,\tau_n^\eps)}(\lambda^\eps_t) \cdot \eps \thinspace e^{-\alpha_\aon \lambda^\eps_t} \left(I_{\lambda^\eps_t} - I_{\lambda^\eps_t \wedge \sigma_{n-1}^\eps}  \right). 
\end{aligned}
\end{equation*}
We start by noting that $\mathsf{L}^{1,\eps}_t(\omega)$, $\mathsf{L}^{2,\eps}_t(\omega)$ are bounded for all $(t,\omega) \in [0,\Tfin_\eps]\times\Omega$. We will now show that for $\omega \in G_{\Tfin_\eps}^{\eps,\delta}$, $|\mathsf{L}^{1,\eps}_t|$, $|\mathsf{L}^{2,\eps}_t|$ are in fact of order $\delta$. 
We note that if $\omega \in G_{\Tfin_\eps}^{\eps,\delta}$, then $\lambda^\eps_t(\omega) \in [\sigma^\eps_{n-1}(\omega),\tau^\eps_n(\omega))$ iff $t \in [s_{n-1},t_n)$ for all $1 \le n \le \Tfin_\eps$. Thus, we have for each $t \in [0,\Tfin_\eps]$,
\begin{equation*}
\begin{aligned}
1_{G_{\Tfin_\eps}^{\eps,\delta}}\thinspace \cdot \mathsf{L}^{1,\eps}_t &= 1_{G_{\Tfin_\eps}^{\eps,\delta}} \cdot \sum_{n \ge 1} 1_{[s_{n-1},t_n)}(t) \cdot \left\{ \left(\sfX^{n-1,\eps,\aoff}_{\sigma_{n-1}^\eps}- \frac{\beta}{\alpha_\aon}\right) e^{-\alpha_\aon (\lambda^\eps_t-\sigma_{n-1}^\eps)}\right.\\ & \qquad \qquad \qquad \qquad \qquad \qquad \qquad \qquad \left. - \left(\sfx^{n-1,\aoff}(s_{n-1}) - \frac{\beta}{\alpha_\aon}\right) e^{-\alpha_\aon (t-s_{n-1})} \right\}\\
&= 1_{G_{\Tfin_\eps}^{\eps,\delta}} \cdot \sum_{n \ge 1} 1_{[s_{n-1},t_n)}(t) \cdot \left\{ \left(\sfX^{n-1,\eps,\aoff}_{\sigma_{n-1}^\eps} - \sfx^{n-1,\aoff}(s_{n-1})\right) e^{-\alpha_\aon(\lambda^\eps_t-\sigma^\eps_{n-1})} \right.\\
&\left. \qquad \qquad \qquad \qquad \qquad + \left(\sfx^{n-1,\aoff}(s_{n-1})-\frac{\beta}{\alpha_\aon}\right) \left(e^{-\alpha_\aon(\lambda^\eps_t-\sigma^\eps_{n-1})} - e^{-\alpha_\aon(t-s_{n-1})}\right) \right\}.
\end{aligned}
\end{equation*}
It is now easy to see that 
\begin{equation*}
\left| 1_{G_{\Tfin_\eps}^{\eps,\delta}}\thinspace \cdot  \mathsf{L}^{1,\eps}_t \right|  \le 1_{G_{\Tfin_\eps}^{\eps,\delta}} \cdot  \sum_{n \ge 1} 1_{[s_{n-1},t_n)}(t) \cdot \left\{ x_\refer \thinspace \alpha_\aoff \thinspace \delta + \left(\frac{\beta}{\alpha_\aon}-x^*\right) \alpha_\aon \delta\right\}.
\end{equation*}
Hence, there exists $K_1>0$ such that for all $t \in [0,\Tfin_\eps]$, 
\begin{equation}\label{E:cont_state_close_ms_1}
|\mathsf{L}^{1,\eps}_t(\omega)| \le 1_{G_{\Tfin_\eps}^{\eps,\delta}}(\omega) K_1 \delta + 1_{\Omega\setminus G_{\Tfin_\eps}^{\eps,\delta}}(\omega) K_1.
\end{equation}
Turning to $\mathsf{L}^{2,\eps}_t$, we note that
\begin{equation*}
1_{G_{\Tfin_\eps}^{\eps,\delta}}\thinspace \cdot \mathsf{L}^{2,\eps}_t = 1_{G_{\Tfin_\eps}^{\eps,\delta}}\thinspace \cdot \sum_{n \ge 1} 1_{[t_{n},s_n)}(t)\cdot x_\refer \medspace \left\{ e^{-\alpha_\aoff(\lambda^\eps_t-\tau_{n}^\eps)} - e^{-\alpha_\aoff(t-t_{n})}\right\},
\end{equation*}
which gives 
\begin{equation*}
\left|1_{G_{\Tfin_\eps}^{\eps,\delta}}\thinspace \cdot \mathsf{L}^{2,\eps}_t \right| \le 1_{G_{\Tfin_\eps}^{\eps,\delta}}\thinspace \cdot \sum_{n \ge 1} 1_{[t_{n},s_n)}(t) \cdot x_\refer \thinspace \alpha_\aoff \thinspace |\lambda^\eps_t-\tau^\eps_n-t+t_n| \le 2 x_\refer \thinspace \alpha_\aoff \thinspace \delta. 
\end{equation*}
Hence, there exists $K_2>0$ such that for all $t \in [0,\Tfin_\eps]$, 
\begin{equation}\label{E:cont_state_close_ms_2}
|\mathsf{L}^{2,\eps}_t(\omega)| \le 1_{G_{\Tfin_\eps}^{\eps,\delta}}(\omega) K_2 \delta + 1_{\Omega\setminus G_{\Tfin_\eps}^{\eps,\delta}}(\omega) K_2.
\end{equation}
Turning now to $\mathsf{L}^{3,\eps}_t$, we use integration by parts to get $I_t = e^{\alpha_\aon t} W_t - \alpha_\aon \int_0^t W_s e^{\alpha_\aon s}ds$. It now follows that for any $t \in [0,\Tfin_\eps]$,
\begin{equation*}
\mathsf{L}^{3,\eps}_t = \eps \thinspace \sum_{n \ge 1} 1_{[\sigma_{n-1}^\eps,\tau_n^\eps)}(\lambda^\eps_t) \cdot\left[ W_{\lambda^\eps_t} - e^{-\alpha_\aon [\lambda^\eps_t - \lambda^\eps_t \wedge \sigma_{n-1}^\eps]} W_{\lambda^\eps_t \wedge \sigma_{n-1}^\eps} -\alpha_\aon e^{-\alpha_\aon \lambda_t^\eps} \int_{\lambda_t^\eps\wedge\sigma^\eps_{n-1}}^{\lambda^\eps_t} W_s e^{\alpha_\aon s} ds \right],
\end{equation*} 
whence
$|\mathsf{L}^{3,\eps}_t | \le \eps \left( 2 + \alpha_\aon\right) \Tfin_\eps \sup_{0 \le t \le \Tfin_\eps} |W_t|$. Recalling \eqref{E:cont_state_close_ms_1} and \eqref{E:cont_state_close_ms_2}, we see that for $p>0$, there exists $c_p>0$ such that the first line of \eqref{E:full_state_close_ms} holds.

To bound $\sup_{0 \le t \le \Tfin_\eps} | Y^\eps_{\lambda^\eps_t}-y(t) |^p$, note that for $\omega \in G_{\Tfin_\eps}^{\eps,\delta}$, we have $Y^\eps_{\lambda^\eps_t}=y(t)$ for all $t \in [0,\Tfin_\eps]$. Consequently,
$\sup_{0 \le t \le \Tfin_\eps} |Y^\eps_{\lambda^\eps_t}-y(t)|^p = 1_{\Omega\setminus G_{\Tfin_\eps}^{\eps,\delta}} \cdot \sup_{0 \le t \le \Tfin_\eps} |Y^\eps_{\lambda^\eps_t}-y(t)|^p \le 1_{\Omega\setminus G_{\Tfin_\eps}^{\eps,\delta}}$. 
\end{proof}


To state and prove Lemmas \ref{L:bnminus} and \ref{L:bnplus}, we will need some notation. For $n \in \BN$, $\xi \in (0,x_\refer)$, we let
$a_{n}(t;\xi) \triangleq 1_{[n-1,\infty)}(t) \cdot \left\{\beta/\alpha_\aon + \left( \xi -  \beta/\alpha_\aon  \right) e^{-\alpha_\aon(t-(n-1))}\right\}$.
It is now easily checked that for $t \ge n-1$ and $\varkappa \in (0,1)$ small enough,
\begin{equation}\label{E:an_envelope}
a_n(t;x^*)-\varkappa \le a_n(t;x^*-\varkappa) \le a_n(t;x^*) \le a_n(t;x^* + \varkappa) \le a_n(t;x^*) + \varkappa. 
\end{equation}

We will also find it helpful to express the continuous square-integrable martingale $I_t = \int_0^t e^{\alpha_\aon u}dW_u$ as a time-changed Brownian motion. The quadratic variation process of $I$ given by
$\la I \ra_t = \int_0^t e^{2 \alpha_\aon u} du = (e^{2 \alpha_\aon t} - 1)/(2 \alpha_\aon)$ for $t \ge 0$
is strictly increasing with $\lim_{t \to \infty} \la I \ra_t=\infty$. It therefore follows \cite[Theorem 3.4.6]{KS91} that the process $V = \{ V_s, \mathscr{G}_s:0 \le s < \infty\}$ defined by
$V_s \triangleq I_{T(s)}$, $\mathscr{G}_s \triangleq \filt_{T(s)}$ where $T(s) \triangleq \inf\{t \ge 0:\la I \ra_t > s\}= [\log \left( 1 + 2 \alpha_\aon s  \right)]/(2 \alpha_\aon)$,
is a standard one-dimensional Brownian motion, and further, that 
$I_t = V_{\la I \ra_t}$ for all $t \ge 0$.

Below, we will use the fact that if $\omega \in G_{n-1}^{\eps,\delta}$ (where $n \in \BN$), then $\sigma_{n-1}^\eps=n-1$, and
$|\sfX^{n-1,\eps,\aoff}_{\sigma_{n-1}^\eps}(\omega)-x^*| \le x_\refer \thinspace \alpha_\aoff \thinspace |\tau_{n-1}^\eps(\omega)-t_{n-1}| \le x_\refer \thinspace \alpha_\aoff \thinspace \delta$.
Set
\begin{equation}\label{E:mu_definition}
\mu \triangleq \beta - (\alpha_\aon + \alpha_\aoff) x_\refer, \qquad \varkappa \triangleq x_\refer \thinspace \alpha_\aoff \thinspace \delta.
\end{equation}
Note that, on account of the upper bound on $\alpha_\aoff$ in \eqref{E:alpha_off_assumption_real}, we have $\mu>0$.

\begin{proof}[Proof of Lemma \ref{L:bnminus}]
We start by noting that for $n \ge 1$, 
\begin{equation*}
\sfX^{n,\eps,\aon}_t \cdot 1_{G_{n-1}^{\eps,\delta}} (\omega) = 1_{G_{n-1}^{\eps,\delta}} (\omega) \cdot 1_{[n-1,\infty)}(t) \cdot \left\{ a_n\left( t; \sfX^{n-1,\eps,\aoff}_{\sigma_{n-1}^\eps} \right) + \eps e^{-\alpha_\aon t} \left( I_t - I_{n-1} \right) \right\}.
\end{equation*}
Using the fact that for $\omega \in G_{n-1}^{\eps,\delta}$, we have $\sfX^{n-1,\eps,\aoff}_{\sigma_{n-1}^\eps}(\omega) \in [x^*-\varkappa,x^*+\varkappa]$, together with \eqref{E:an_envelope}, we get
\begin{multline*}
B_n^{\eps,\delta,-} \subset \left\{ \omega \in G_{n-1}^{\eps,\delta}: \sup_{t \in [n-1,t_n-\delta]} \left( a_n\left( t; \sfX^{n-1,\eps,\aoff}_{\sigma_{n-1}^\eps} \right) + \eps e^{-\alpha_\aon t} \left( I_t - I_{n-1} \right)  \right) \ge x_\refer \right\}\\ \subset \left\{ \omega \in G_{n-1}^{\eps,\delta}:  \sup_{t \in [n-1,t_n-\delta]}   e^{-\alpha_\aon t} \left( I_t - I_{n-1} \right) \ge \frac{x_\refer - a_n(t_n-\delta; x^*) - \varkappa }{\eps} \right\}\\ \subset \left\{ \omega \in \Omega:  \sup_{t \in [n-1,t_n-\delta]}   e^{-\alpha_\aon t} \left( I_t - I_{n-1} \right) \ge \frac{\mu \delta}{\eps} \right\}
\end{multline*}
where the latter set inclusion uses the fact that $x_\refer - a_n(t_n-\delta;x^*) - \varkappa \ge \delta \mu$. We now easily get that
\begin{equation*}
\BP \left( B_n^{\eps,\delta,-} \right) \le \BP \left\{ \sup_{t \in [n-1,t_n-\delta]} \left( V_{\la I \ra_t} - V_{\la I \ra_{n-1}} \right) \ge \frac{\mu \delta e^{\alpha_\aon(n-1)}}{\eps} \right\}.
\end{equation*}
Letting
$u_n \triangleq \la I \ra_{n-1}$, $q \triangleq  \la I \ra_t-u_n$, $v_n \triangleq \la I \ra_{t_n-\delta}$, 
and noting that $\hat{V}_q \triangleq V_{u_n + q} - V_{u_n}$ is a Brownian motion, we get 
\begin{equation*}
\BP \left( B_n^{\eps,\delta,-} \right) \le \BP \left\{ \sup_{q \in [0,v_n-u_n]} \hat{V}_q \ge \frac{\mu \delta e^{\alpha_\aon(n-1)}}{\eps} \right\} = 2\normaltail \left( \frac{\mu \delta}{\eps \sqrt{ \frac{e^{2 \alpha_\aon(t^*-\delta)} - 1} {2 \alpha_\aon} }}  \right), 
\end{equation*}
where we have explicitly computed $u_n$, $v_n$, and also used \cite[Remark 2.8.3]{KS91}. Since $e^{2\alpha_\aon(t^*-\delta)}-1 \le e^{2 \alpha_\aon t^*}$, we easily get \eqref{E:prob_bnminus} with $K_- \triangleq \sqrt{2 \alpha_\aon} e^{-\alpha_\aon t^*} \mu$.
\end{proof}

\begin{proof} [Proof of Lemma \ref{L:bnplus}]
Using the fact that for $\omega \in G_{n-1}^{\eps,\delta}$, we have $\sfX^{n-1,\eps,\aoff}_{\sigma_{n-1}^\eps}(\omega) \in [x^*-\varkappa,x^*+\varkappa]$, together with \eqref{E:an_envelope}, we get
\begin{multline*}
B_n^{\eps,\delta,+} = \left\{\omega \in G_{n-1}^{\eps,\delta}: \sup_{t \in [n-1,t_n+\delta]} \left( a_n(t;\sfX^{n-1,\eps,\aoff}_{\sigma_{n-1}^\eps}) + \eps e^{-\alpha_\aon t} (I_t - I_{n-1}) \right) < x_\refer \right\} \\ \subset \left\{  \omega \in G_{n-1}^{\eps,\delta}: a_n(t_n+\delta;x^*) - \varkappa + \eps e^{-\alpha_\aon (t_n + \delta)} \left(I_{t_n+\delta}-I_{n-1}\right) < x_\refer \right\} \\ \subset \left\{ \omega \in \Omega: e^{-\alpha_\aon (t_n + \delta)} \left(I_{t_n+\delta}-I_{n-1}\right) <  \left( \frac{x_\refer+\varkappa-a_n(t_n+\delta;x^*)}{\eps} \right) \right\}
\end{multline*}
Recalling Assumption \ref{A:assumptions_real}, a bit of computation reveals that if we let
\begin{equation*}\label{E:delta_zero}
\delta_+ \triangleq \frac{1}{\alpha_\aon} \log \left[ \frac{2 \beta - 2 \alpha_\aon x_\refer}{\beta - \alpha_\aon x_\refer + \alpha_\aoff x_\refer} \right] > 0,
\end{equation*}
then, for $0 < \delta <\delta_+$, we have
\begin{equation*}
\frac{x_\refer+\varkappa-a_n(t_n+\delta;x^*)}{\eps} < -\left( \frac{\mu}{2} \right) \frac{\delta}{\eps} < 0
\end{equation*}
Since $e^{-\alpha_\aon(t_n+\delta)} (I_{t_n+\delta}-I_{n-1}) \sim \mathscr{N} \left(0, \frac{1-e^{-2\alpha_\aon(t^*+\delta)}}{2 \alpha_\aon} \right)$, a straightforward calculation yields that for $0<\delta<\delta_+$, \eqref{E:prob_bnplus} holds with $K_+ \triangleq \mu\sqrt{\alpha_\aon/2}$. 
\end{proof}

\end{document}